\documentclass[11pt]{amsart}
\linespread{1.2}

\usepackage{amsmath, amssymb, mathrsfs, amsthm, MnSymbol, xifthen}
\usepackage[arrow, matrix]{xy}
\newtheorem{thm}{Theorem}[section]
\newtheorem{lemma}[thm]{Lemma}
\newtheorem{prop}[thm]{Proposition}
\newtheorem{crl}[thm]{Corollary}

\theoremstyle{definition}
\newtheorem{dfn}[thm]{Definition}
\newtheorem{exm}[thm]{Example}

\newtheorem{rem}[thm]{Remark}

\newcommand{\reals}{\mathbb{R}}
\newcommand{\naturals}{\mathbb{N}}
\newcommand{\integers}{\mathbb{Z}}
\newcommand{\rx}{\sgr{\mathbb{R}}{\ux}}
\newcommand{\sos}{\sum\mathbb{R}[\underline{X}]^2}


\newcommand{\spr}{\textrm{Sper}}

\newcommand{\ux}{\underline{X}}

\newcommand{\Pos}{\mbox{Psd}}
\newcommand{\la}{\langle}
\newcommand{\ra}{\rangle}

\newcommand{\ringsos}[1]{\sum #1^2}
\newcommand{\K}[1]{\mathcal{K}_{#1}}
\newcommand{\pos}[1]{\mbox{Psd}(#1)}

\newcommand{\V}[1]{\mathcal{X}_{#1}}
\newcommand{\Z}[1]{\mathcal{Z}(#1)}
\newcommand{\T}[1]{\mathcal{T}_{#1}}

\newcommand{\sgr}[2]{#1[#2]}
\newcommand{\ringsop}[2]{\sum #1^{#2}}

\newcommand{\Hom}[2]{\mbox{Hom}(#1,#2)}
\newcommand{\norm}[2]{\|\ifthenelse{\isempty{#2}}{\cdot}{#2}\|_{#1}}
\newcommand{\cl}[2]{\overline{#2}^{\ifthenelse{\isempty{#1}}{}{#1}}}

\begin{document}

\title[Closure of $\ringsop{R}{2d}$ in Topological Algebras]{Closure of the cone of sums of $2d$-powers in real topological algebras}

\author[M. Ghasemi, S. Kuhlmann]{Mehdi Ghasemi$^1$, Salma Kuhlmann$^2$}

\date{March 23, 2012}

\address{$^1$Department of Mathematics and Statistics,\newline
University of Saskatchewan,\newline
Saskatoon, SK. S7N 5E6, Canada}
\address{$^2$Fachbereich Mathematik und Statistik,\newline
Universit\"{a}t Konstanz\newline 78457 Konstanz, Germany}
\email{mehdi.ghasemi@usask.ca, salma.kuhlmann@uni-konstanz.de}

\keywords{Positive polynomials, sums of squares, cone of sums of
2d-powers, semialgebraic sets, locally convex topologies, 
positive semidefinite continuous linear functionals, moment problem} 
\subjclass[2010]{Primary 13J30, 14P10,
44A60; Secondary 12D15, 43A35, 46B99.}

\begin{abstract}
Let $R$ be a unitary commutative $\reals$-algebra and
$K\subseteq\V{R}:=\Hom{R}{\reals}$, closed with respect to the
product topology. We consider $R$ endowed with the topology $\T{K}$,
induced by the family of seminorms $\rho_{\alpha}(a):=|\alpha(a)|$,
for $\alpha\in K$ and $a\in R$. In case $K$ is compact, we also
consider the topology induced by $\norm{K}{a}:=\sup_{\alpha\in K}|\alpha(a)|$ for $a\in R$. If $K$ is Zariski dense, then those
topologies are Hausdorff. In this paper we prove that the closure of
the cone of sums of $2d$-powers, $\ringsop{R}{2d}$, with respect to
those two topologies is equal to $\pos{K}:=\{a\in R:\alpha(a)\ge0,\textrm{ for all }\alpha\in K\}$. In particular, any
continuous linear functional $L$ on the polynomial ring
$R=\rx:=\reals[X_1,\dots,X_n]$ with $L(h^{2d})\ge0$ for each
$h\in\rx$ is integration with respect to a positive Borel measure
supported on $K$. Finally we give necessary and sufficient conditions to
ensure the continuity of a linear functional with respect to those two topologies.
\end{abstract}

\maketitle
\section{Introduction}
The (real) multidimensional $K$-moment problem for a given closed set
$K\subseteq\reals^n$, is the question of when a real valued linear functional
$L$, defined on the real algebra of polynomials $\rx$, is representable as integration with
respect to a positive Borel measure on $K$. A subset $C$ of $\rx$ is
called a cone, if $C+C\subseteq C$ and $\reals^+ C\subseteq C$, where $\reals^+$ denotes the non-negative real numbers. Let
us denote the cone of non-negative polynomials on $K$ by $\pos{K}$. If $L$ is representable by a measure then clearly,
for any polynomial $f\in\pos{K}$, $L(f)\ge0$ (i.e. $L(\pos{K})\subseteq\reals^+$). Haviland \cite{Hav1, Hav2}, proved that
this necessary condition is also sufficient. However, $\pos{K}$ is
seldom finitely generated \cite[Proposition 6.1]{Sch1}. So in general, there is no
practical decision procedure for the membership problem for
$\pos{K}$, and a fortiori for $L(\pos{K})\subseteq\reals^+$. 

We are mainly interested in the solutions of
\begin{equation}\label{KMP}
    \pos{K}\subseteq\cl{\tau}{C},
\end{equation}
where $C$ is a cone, $K$ is a closed subset of $\reals^n$ and
$\cl{\tau}{C}$ denotes the closure of $C$ with respect to a locally
convex topology $\tau$ on $\rx$. It is proved in \cite[Proposition
3.1]{GKS} that \eqref{KMP} holds if and only if for every
$\tau$-continuous linear functional $L$, nonnegative on $C$, there
exists a positive Borel measure $\mu$ supported on $K$, such that
\[
    L(f)=\int_Kf~d\mu,\quad\forall f\in\rx.
\]
Clearly, if the functional $L$ is representable by a measure, then
$L$ has to be {\it positive semidefinite}, i.e., $L(p^2)\ge0$ for
all $p\in\rx$. So in \eqref{KMP}, the simplest cone to consider is
$C=\sos$. If $\tau$ is the finest locally convex topology $\varphi$
on $\rx$, then any given linear functional is $\varphi$-continuous
\cite{BCRBK}. In this setting, Schm\"{u}dgen in \cite[Theorem
3.1]{Schm2} and Berg, Christensen and Jensen in \cite[Theorem
3]{BCJ} prove that $\cl{\varphi}{\sos}=\sos$. Later, in \cite[Theorem 9.1]{BCR}
Berg, Christensen and Ressel prove that taking $C=\sos$ and $\tau$
to be the $\ell_1$-norm topology, then $K=[-1,1]^n$ solves
\eqref{KMP}, i.e., $\cl{\norm{1}{}}{\sos}=\pos{[-1,1]^n}$. This was
further generalized in \cite{BCRBK} and \cite{BM-EB} to include
commutative semigroup-rings and topologies induced by absolute
values. These results has been revisited in \cite{JLN} with a
different approach, and were recently generalized in \cite{GKS} to
weighted $\ell_p$-norms, $p\ge1$.
In \cite{GMW} it is shown that the general result in \cite{BM-EB}
carries to the even smaller cone of sums of $2d$-powers,
$\ringsop{\rx}{2d}\subseteq\sos$, where $d\ge1$ is an integer.  In
particular, $\cl{\norm{1}{}}{\ringsop{\rx}{2d}}=\pos{[-1,1]^n}$. 

We now discuss \eqref{KMP} for other special cones. A set
$T\subseteq\rx$ is called a \textit{preordering}, if $T+T\subseteq
T$, $T\cdot T\subseteq T$ and $\sos\subseteq T$. For
$S\subseteq\rx$, we denote the smallest preordering, containing $S$
by $T_S$. A preordering $T$ is said to be finitely generated, if
$T=T_S$ for some finite set $S\subseteq\rx$. The smallest
preordering of $\rx$ is $\sos$, as considered above. A subset
$M\subseteq\rx$ is a $\ringsop{\rx}{2d}$-\textit{module} if $1\in
M$, $M+M\subseteq M$ and $\ringsop{\rx}{2d}\cdot M\subseteq M$. If
$d=1$, $M$ is said to be a {\it quadratic} module.  $M$ is said to
be finitely generated, if $M=M_S$ for some finite set
$S\subseteq\rx$, and \textit{Archimedean} if for every $f\in\rx$
there exists $n\in\naturals$ such that $n\pm f\in M$. The
non-negativity set of a subset $S\subset \rx$ will be denoted by
$K_S$, and is defined by $K_S:=\{x\in\reals^n:\forall f\in
S~f(x)\ge0\}$. If $S$ is finite, $K_S$ is called a \textit{basic
closed semialgebraic set}. 

In \cite{Schm} Schm\"{u}dgen proves
that for a finite $S$, if $K_S$ is \textit{compact}, then $K_S$
solves \eqref{KMP} for $C=T_S$ and $\tau=\varphi$, i.e.,
$\cl{\varphi}{T_S}=\pos{K_S}$.
Jacobi proved \cite{J} that if $M$ is an Archimedean
$\ringsop{\rx}{2d}$-module then $\cl{\varphi}{M}=\pos{K_M}$ (see \cite[Theorem 1.3 and 1.4]{MPut} for $d=1$). In
\cite{JL:K-Moment}, Lasserre proves that for a specific fixed norm
$\norm{w}{}$, and any finite $S$,
$\cl{\norm{w}{}}{M_S}=\cl{\norm{w}{}}{T_S}=\pos{K_S}$. In
particular, if $S=\emptyset$,
$\cl{\norm{w}{}}{\sos}=\pos{\reals^n}$. It is worth noting that the
latter equality could be derived by suitable modifications of
Schm\"{u}dgen \cite[Lemma 6.1 and Lemma 6.4]{Schm2}.

\textit{Throughout the paper the algebras under consideration are unitary and commutative}. In this paper, we study \eqref{KMP} in a more general context.
In Section \ref{Background}, we recall some standard notations and
elementary material which will be needed in the following sections.
We consider a $\integers[\frac{1}{2}]$-algebra $R$ and a
$K\subseteq\V{R}:=\Hom{R}{\reals}$, closed with respect to the
product topology. 

In Section \ref{topologies}, we associate to $K$ a
topology $\T{K}$ on $R$, making all homomorphisms in $K$ continuous.
When $K$ is compact we define a seminorm $\norm{K}{}$ on $R$, which
induces a strictly finer topology than $\T{K}$. If $K$ is Zariski
dense, then those topologies are Hausdorff. 

In section \ref{MP}, we study \eqref{KMP} in
terms of the two topologies $\T{K}$, $\norm{K}{}$ for the cone
$C=\ringsop{R}{2d}$ of sums of $2d$-powers. 
The two main results are Theorems \ref{SosRng} and \ref{SosRngNC}:
we prove that for $K$ as above, the closure of $\ringsop{R}{2d}$
with respect to $\T{K}$ is $\pos{K}$. 
Here $\pos{K}:=\{a\in R:\alpha(a)\ge0,\textrm{ for all }\alpha\in K\}$.
When $K$ is compact, we use Stone-Weierstrass to prove that the
closure of $\ringsop{R}{2d}$ with respect to the
$\norm{K}{}$-topology is again $\pos{K}$. 
In case $R=\rx$, $K=\reals^n$ and
$d=1$, the first result is a special case of Schm\"{u}dgen's result for locally
multiplicatively convex topologies \cite[Proposition 6.2]{Schm2},
and the second result is straightforward, as noted in \cite[Remark 3.2]{BCJ}. 

Finally, we apply our results to obtain representation of continuous functionals by
measures (Corollaries \ref{M4Cmpt} and \ref{M4NCmpt}).

In Section \ref{RAlg}, we study the case when $R$ is an $\reals$-algebra. We define $\ringsop{R}{2d}$-modules and archimedean modules exactly as we did for $R=\rx$.
We prove that the closure of $\ringsop{R}{2d}$ with respect to
any sub-multiplicative norm is $\pos{\K{\norm{}{}}}$, where $\K{\norm{}{}}$ is the Gelfand spectrum of $(R,\norm{}{})$ (see Theorem \ref{ClsrNrmdAlg}).
Our proof is algebraic and uses a result of T. Jacobi \cite[Theorem 4]{J}. Again, we get representation of continuous functionals by measures (Corollary \ref{M4NAlge}).

Next, we study the case where the cone is a $\ringsop{R}{2d}$-module $M\subseteq R$.
We define $\K{M}=\{\alpha\in\V{R}: \alpha(a)\ge0\quad \forall a\in M\}$.
We show that $\cl{\T{\V{R}}}{M}$, the closure of $M$ with respect to $\T{\V{R}}$, is $\pos{\K{M}}$ (see Theorem \ref{ClMdl}).

In Section \ref{App2RX}, we apply
all these results to the ring of polynomials $\rx$. Moreover, we
study the case when $K$ is not necessarily Zariski-dense. We show
that if $K$ is contained in a variety, a locally convex and
Hausdorff topology $\tau_K$ can still be defined, as the limit of an
inverse family of topologies on $\rx$. We show that
$\cl{\tau_K}{\ringsop{\rx}{2d}}=\tilde{\Pos}(K)$, where
$\tilde{\Pos}(K)$ is the set of polynomials which are nonnegative on
some open set containing $K$. Finally, we compare 
the topologies $\norm{K}{}$ and $\T{K}$ on $\rx$ to sub-multiplicative norm topologies, 
and to the Lasserre's topology $\norm{w}{}$, considered on \cite{JL:K-Moment}.
\section{Preliminaries on topological vector spaces and
rings.}\label{Background}
In the following, all vector spaces are over the field of real
numbers (unless otherwise specified). A \textit{topological vector
space} is a vector space $X$ equipped with a topology such that the
vector space operations (i.e. scalar multiplication and vector
summation) are continuous. A subset $A\subseteq X$ is said to be
\textit{convex} if for every $x,y\in A$ and $\lambda\in[0,1]$,
$\lambda x+(1-\lambda)y\in A$. A \textit{locally convex}
(\textit{lc} for short) topology is a topology which admits a
neighborhood basis of convex open sets at each point. 

Suppose that in addition $X$ is an $\reals$-algebra. A subset $U\subseteq X$
is called a \textit{multiplicative set}, an $m$\textit{-set} for
short, if $U\cdot U\subseteq U$. A locally convex topology on $X$ is
said to be \textit{locally multiplicatively convex} (or \textit{lmc}
for short) if there exists a fundamental system of neighborhoods for
$0$ consisting of $m$-sets. It is immediate from the definition that
the multiplication is continuous in a lmc-topology. 

\begin{dfn}
A function $\rho:X\rightarrow\reals^{\ge0}$ is called a seminorm, if\\
\indent (i) $\forall x,y\in X\quad\rho(x+y)\leq\rho(x)+\rho(y)$,\\
\indent (ii) $\forall x\in X~\forall r\in\reals\quad\rho(rx)=|r|\rho(x)$.\\
$\rho$ is called a multiplicative seminorm, if in addition $\rho$ satisfies the following\\
\indent (iii) $\forall x,y\in X\quad\rho(x\cdot y)\leq\rho(x)\rho(y).$
\end{dfn}
\begin{dfn}
Let $\mathcal{F}$ be a nonempty family of seminorms on $X$. The
topology generated by $\mathcal{F}$ on $X$ is the coarsest topology
on $X$ making all seminorms in $\mathcal{F}$ continuous. It is a
locally convex topology on $X$. The family of sets of the form
\[
    U^{\epsilon}_{\rho_1,\dots,\rho_k}(x):=\{y\in X~:~\rho_i(x-y)\leq\epsilon,~i=1,\dots,k\}
\]
where $\epsilon>0$ and $\rho_1\dots,\rho_k\in\mathcal{F}$, forms a
basis for this topology.

\end{dfn}~\\
 We have the following characterization of lc and lmc
spaces.
\begin{thm}
Let $X$ be an algebra and $\tau$ a topology on $X$. Then
\begin{enumerate}
    \item{$\tau$ is lc if and only if it is generated by a family of seminorms on $X$.}
    \item{$\tau$ is lmc if and only if it is generated by a family of multiplicative seminorms on $X$.}
\end{enumerate}
\end{thm}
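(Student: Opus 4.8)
The plan is as follows. The implication $(\Leftarrow)$ of $(1)$ is already essentially contained in the Definition preceding the statement: given a family $\mathcal{F}$ of seminorms, the basic sets $U^{\epsilon}_{\rho_1,\dots,\rho_k}(x)$ are finite intersections of the convex sets $\{y:\rho_i(x-y)\le\epsilon\}$, hence convex, so the generated topology is locally convex (continuity of the vector operations follows routinely from subadditivity and absolute homogeneity). So I would concentrate on $(\Rightarrow)$ of $(1)$ and on both directions of $(2)$, the tool throughout being the Minkowski gauge.

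For $(\Rightarrow)$ of $(1)$, I would pass to the neighbourhood filter of $0$. Since $\tau$ is a locally convex vector topology, $0$ admits a neighbourhood basis $\mathcal{B}$ of absolutely convex open sets. Each $V\in\mathcal{B}$ is absorbing, so its gauge $\rho_V(x):=\inf\{t>0:x\in tV\}$ is a well-defined, $\tau$-continuous seminorm, and $V=\{x:\rho_V(x)<1\}$ since $V$ is open. Setting $\mathcal{F}:=\{\rho_V:V\in\mathcal{B}\}$ and letting $\tau_{\mathcal{F}}$ be the topology it generates, continuity of the $\rho_V$ gives $\tau_{\mathcal{F}}\subseteq\tau$, while the identity $V=\{x:\rho_V(x)<1\}$ exhibits each $V\in\mathcal{B}$ as a $\tau_{\mathcal{F}}$-neighbourhood of $0$, so $\tau\subseteq\tau_{\mathcal{F}}$; since a topological vector space topology is determined by its neighbourhood filter at $0$, this gives $\tau=\tau_{\mathcal{F}}$.

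For $(2)$, the direction $(\Leftarrow)$ is quick: if $\tau$ is generated by a family of multiplicative seminorms, then for $\epsilon\le1$ the basic neighbourhood $U^{\epsilon}_{\rho_1,\dots,\rho_k}(0)$ is an $m$-set, since $y,z$ in it give $\rho_i(yz)\le\rho_i(y)\rho_i(z)\le\epsilon^2\le\epsilon$; these sets still form a fundamental system at $0$, so $\tau$ is lmc. For $(\Rightarrow)$ of $(2)$, I would first record two elementary facts: (a) the absolutely convex hull $\widehat{U}$ of an $m$-set $U$ is again an $m$-set, since if $x=\sum_i\lambda_iu_i$ and $y=\sum_j\mu_jv_j$ with $u_i,v_j\in U$, $\sum_i|\lambda_i|\le1$, $\sum_j|\mu_j|\le1$, then $xy=\sum_{i,j}\lambda_i\mu_j(u_iv_j)$ with $u_iv_j\in U$ and $\sum_{i,j}|\lambda_i\mu_j|\le1$; and (b) the gauge of an absolutely convex $m$-set neighbourhood $V$ of $0$ is multiplicative, because $x\in sV$ and $y\in tV$ force $xy\in st\,(VV)\subseteq st\,V$, whence $\rho_V(xy)\le\rho_V(x)\rho_V(y)$. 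Since an lmc topology is in particular lc, $0$ has an absolutely convex neighbourhood basis, and by definition it also has an $m$-set neighbourhood basis; inside any absolutely convex neighbourhood $W$ of $0$ I can fit an $m$-set neighbourhood $U\subseteq W$ and replace it by $\widehat{U}\subseteq W$, an absolutely convex $m$-set neighbourhood of $0$. Hence such sets form a neighbourhood basis at $0$, the family of their gauges consists of multiplicative seminorms by (b), and the comparison of neighbourhood filters used in $(1)$ shows this family generates $\tau$.

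I do not expect a genuine obstacle here; the only points requiring any care are the existence of an absolutely convex (open) neighbourhood basis in a locally convex space and the standard identities for Minkowski gauges, after which the multiplicativity computations in $(2)$ are immediate.
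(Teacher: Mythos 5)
Your proof is correct; the paper itself gives no argument for this theorem, merely citing \cite{HJ} for (1) and \cite{B-N-S} for (2), and your Minkowski-gauge proof is precisely the standard argument found in those references (absolutely convex open neighbourhood basis and gauge functionals for (1); idempotent absolutely convex hulls and submultiplicativity of their gauges for (2)). The only micro-point worth noting is that the hull $\widehat{U}$ need not be open, so instead of the identity $V=\{x:\rho_V(x)<1\}$ used in part (1) you should invoke the inclusions $\{x:\rho_{\widehat{U}}(x)<1\}\subseteq\widehat{U}\subseteq\{x:\rho_{\widehat{U}}(x)\le1\}$, which suffice for the comparison of neighbourhood filters and change nothing in the argument.
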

\begin{proof}
See \cite[Theorem 6.5.1]{HJ} for (1) and \cite[4.3-2]{B-N-S} for (2).
\end{proof}

Let $R$ be a commutative ring with $1$ and $\frac{1}{2}\in R$.
We always assume that $\V{R}=\Hom{R}{\reals}$, the set of unitary
homomorphisms, is nontrivial. Clearly, $\V{R}\subset\reals^R$,
therefore, it carries a topology as subspace of $\reals^R$ with
the product topology which is Hausdorff. For any $a\in R$ let $U(a):=\{\alpha\in\V{R}:
\alpha(a)<0\}$. The family $\{U(a):a\in R\}$ forms a subbasis for
the subspace topology on $\V{R}$ which is the coarsest topology
making all projection functions $\hat{a}:\V{R}\rightarrow\reals$ continuous where for
$a\in R$, $\hat{a}$ is defined by $\hat{a}(\alpha)=\alpha(a)$. $\V{R}$
also can be embedded in $\spr(R)$ equipped with spectral topology
\cite[Theorem 5.2.5 and Lemma 5.2.6]{MPS}. Since all projections are
continuous, the topology of $\V{R}$ coincides with the subspace
topology inherited from $\reals^R$ equipped with product topology.
For $K$ be a subset of $\V{R}$  we denote by $C(K)$ the algebra of continuous real valued
functions on $K$.

\begin{dfn}
To any subset $S$ of $R$ we associate a subset $\Z{S}$ of $\V{R}$,
called the zeros of $S$ or the variety of $S$ by
$\Z{S}:=\{\alpha\in\V{R}:\alpha(S)=\{0\}\}$. Denote by $\la S\ra$
the ideal generated by $S$. Then $\Z{S}=\Z{\la S\ra}$, and the
family $\{\V{R}\setminus\Z{S}:S\subseteq R\}$ forms a sub-basis for
a topology called the \textit{Zariski topology} on $\V{R}$.
\end{dfn}

Now Suppose that the map (defined in Lemma \ref{phidef} with $K = \V{R}$,
$a\mapsto\hat{a}$ is injective, then a subset $K$ of $\V{R}$ is
dense in $\V{R}$ with respect to the Zariski topology
(\textit{Zariski dense}) if and only if $K\not\subset\Z{I}$ for any
proper $I$ ideal of $R$.

\begin{exm}
Suppose that $K\subseteq\reals^n$ has nonempty interior, then
clearly $K$ is Zariski dense in $\reals^n$. Let $K$ be a basic
closed semialgebraic set, i.e., $K:=\{x\in\reals^n:f_i(x)\ge0,
i=1,\dots,m\}$ where $f_i\in\rx$, $i=1,\dots,m$. We show that if $K$
has an empty interior then $K$ is contained in $\Z{g}$, where
$g=f_1\dots f_m$. For each $x\in K$, $f_i(x)=0$ for some $1\leq
i\leq m$. Otherwise, $f_i(x)>0$ for each $i=1,\dots,m$, the
continuity of polynomials implies that $f_i(y)>0$, $i=1,\dots,m$ for
$y$ sufficiently close to $x$ and hence $x$ is an interior point
which is impossible. Therefore $g(x)=0$ and hence $K\subseteq\Z{g}$.
Note that any closed semialgebraic set $K$, is a finite union of
basic closed semialgebraic sets \cite[Theorem 2.7.2]{B-C-R} i.e.,
$K=\bigcup_{i=1}^lK_i$. If $K$ has an empty interior, then each
$K_i$ is so and hence $K\subseteq\Z{g_1\dots g_l}$, where each $g_i$
is the product of generators of $K_i$.
\end{exm}
\begin{rem}
In general, the conclusion of the above example is false. For example, let $R=\integers_3\times\rx$ with component wise addition and multiplication. 
$R$ is a $\integers[\frac{1}{2}]$-module where $\frac{1}{2}(1,r)=(2,\frac{r}{2})$ and $\frac{1}{2}(2,r)=(1,\frac{r}{2})$. Also $\V{R}=\reals^n$ is non-trivial. 
$I=\integers_3\times\{0\}$ is a proper ideal of $R$ and $\Z{I}=\reals^n$. Hence, every semialgebraic set is contained in $\Z{I}$.
\end{rem}

\begin{dfn}
If $R$ is a ring, a function $N:R\rightarrow\reals^+$ is called a
\textit{ring-seminorm}
if the following conditions hold for all $x,y\in R$:\\
    \indent (i) $N(0)=0$,\\
    \indent (ii) $N(x+y)\leq N(x)+N(y)$,\\
    \indent (iii) $N(-x)=N(x)$,\\
    \indent (iv) $N(xy)\leq N(x)N(y)$.\\
$N$ is called a \textit{ring-norm}, if in addition\\
    \indent (v) $N(x)=0$ only if $x=0$.
\end{dfn}

We close this section by stating a general version of Haviland's
Theorem.
\begin{thm}\label{Haviland}
Suppose $R$ is an $\reals$-algebra, $X$ is a  Hausdorff space, and
$~\hat{}:R\rightarrow C(X)$ is an $\reals$-algebra homomorphism such
that for some $p\in R$, $\hat{p}\ge0$ on $X$ and the set $X_i=\hat{p}^{-1}([0,i])$ is compact
for each $i=1,2,\dots$. Then for every linear functional
$L:R\rightarrow\reals$ satisfying
\[
    L(\{a\in R:\hat{a}\ge0\textrm{ on }X\})\subseteq\reals^+,
\]
there exists a Borel measure $\mu$ on $X$ such that $\forall a\in
R\quad L(a)=\int_X\hat{a}~d\mu$.
\end{thm}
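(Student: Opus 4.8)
The plan is to reduce this statement to the classical Riesz--Haviland theorem via the homomorphism $\hat{}\colon R \to C(X)$. First I would set $A := \hat{R} \subseteq C(X)$, the image subalgebra, and transport $L$ to a linear functional on $A$. The only subtlety here is that $\hat{}$ need not be injective, so I would check that $L$ descends: if $\hat{a} = 0$ on $X$, then both $\hat{a} \ge 0$ and $\widehat{-a} = -\hat{a} \ge 0$ on $X$, so the positivity hypothesis gives $L(a) \ge 0$ and $L(-a) \ge 0$, hence $L(a) = 0$. Thus $L$ factors through $A$ as a well-defined linear functional $\tilde{L}$ with $\tilde{L}(f) \ge 0$ whenever $f \in A$ and $f \ge 0$ on $X$.

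Next I would invoke the properness/compactness condition. The function $\hat{p} \in A$ satisfies $\hat{p} \ge 0$ on $X$ and $X_i = \hat{p}^{-1}([0,i])$ is compact for each $i$; this is exactly the hypothesis that makes $X$ exhausted by compacta on which $\hat{p}$ is bounded, so that $\hat p$ plays the role of a proper function. I would then apply the Riesz--Haviland representation theorem in the form valid for a unital subalgebra $A \subseteq C(X)$ equipped with such a proper element: a positive (in the above sense) linear functional $\tilde L$ on $A$ is given by integration against a Borel measure $\mu$ on $X$, i.e. $\tilde L(f) = \int_X f\, d\mu$ for all $f \in A$. The role of $\hat p$ is to control the measure near infinity: finiteness of $\tilde L(\hat p)$ bounds $\mu(X \setminus X_i)$, which together with the compact exhaustion yields a genuine (locally finite, and here finite on the relevant sets) Borel measure rather than merely a finitely additive one. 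Pulling back along $\hat{}$ gives $L(a) = \int_X \hat a \, d\mu$ for all $a \in R$, as required.

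The main obstacle is making the measure-theoretic step rigorous: the classical Haviland theorem is usually stated for $A = \reals[X_1,\dots,X_n]$ sitting inside $C(\reals^n)$, and one must either cite or adapt a version for an abstract unital subalgebra of $C(X)$ with a proper "norm-like" element. I would handle this by appealing to the standard generalization (as in Marshall's treatment, via the Riesz representation theorem applied on each compact $X_i$ together with a limiting argument and the bound coming from $\hat p$), checking that separation of points of $X$ is not needed for existence of $\mu$ (only for its uniqueness, which is not claimed). The other routine points — that $X$ being Hausdorff and $\sigma$-compact via the $X_i$ makes Borel measures well-behaved, and that the exhaustion $X = \bigcup_i X_i$ lets one piece together the integral — are bookkeeping rather than genuine difficulties.
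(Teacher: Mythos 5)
The paper gives no argument of its own here: its entire proof is the citation \cite[Theorem 3.2.2]{MPS}, which is precisely the statement at hand. Your plan --- descend to the image subalgebra $A=\hat R$ (using positivity to see $\hat a=0\Rightarrow L(a)=0$) and then invoke the general Riesz--Haviland theorem with the proper element $\hat p$ controlling the mass of $\mu$ at infinity via the compact exhaustion $X=\bigcup_i X_i$ --- is correct and amounts to essentially the same route, supplemented by a reasonable sketch of what the cited proof actually does.
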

\begin{proof}
See \cite[Theorem 3.2.2]{MPS}.
\end{proof}
\section{The topologies $\T{K}$ and $\|.\|_{K}$.}\label{topologies}
Throughout we assume that the map $\hat{~~}:R\rightarrow C(\V{R})$, defined by $\hat{a}(\alpha)=\alpha(a)$ is injective.
\begin{lemma}\label{phidef}
Let $K$ be a subset of $\V{R}$, then
\begin{enumerate}
    \item{The map $\Phi:R\rightarrow C(K)$ defined by $\Phi(a)=\hat{a}|_K$ is a homomorphism.}
    \item{$Im(\Phi)$ contains a copy of $\integers[\frac{1}{2}]$.}
\end{enumerate}
\end{lemma}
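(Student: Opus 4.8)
The proof is a routine verification, so the plan is short. For part (1), the key observation is that the topology on $\V{R}$ is, by definition, the subspace topology inherited from $\reals^R$ with the product topology, which is exactly the coarsest topology making every projection $\hat{a}\colon\V{R}\to\reals$ continuous; hence each $\hat{a}$ is continuous on $\V{R}$, and its restriction $\hat{a}|_K$ lies in $C(K)$, so $\Phi$ is well defined. To see that $\Phi$ is a ring homomorphism, I would simply unwind the definitions pointwise: for every $\alpha\in K$ one has $\widehat{a+b}(\alpha)=\alpha(a+b)=\alpha(a)+\alpha(b)$ and $\widehat{ab}(\alpha)=\alpha(ab)=\alpha(a)\alpha(b)$, because each $\alpha$ is a ring homomorphism, while $\hat{1}(\alpha)=\alpha(1)=1$. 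Therefore $\Phi(a+b)=\Phi(a)+\Phi(b)$, $\Phi(ab)=\Phi(a)\Phi(b)$, and $\Phi(1)$ is the constant function $1$, the identity of $C(K)$; thus $\Phi$ is a (unital) homomorphism.

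For part (2), I would first note that $R$ itself contains a copy of $\integers[\frac{1}{2}]$: the map $\iota\colon\integers[\frac{1}{2}]\to R$, $q\mapsto q\cdot 1_R$, is a ring homomorphism, and it is injective because $\V{R}$ is assumed nontrivial — choosing any $\alpha\in\V{R}$, the composite $\alpha\circ\iota\colon\integers[\frac{1}{2}]\to\reals$ sends $q$ to $q$ and is in particular injective, forcing $\iota$ to be injective. Composing with $\Phi$, the element $q\cdot 1_R$ is sent to the constant function with value $q$ on $K$; since $K\neq\emptyset$, evaluating such a function at any point of $K$ recovers $q$, so the map $q\mapsto\Phi(q\cdot 1_R)$ is injective. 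Its image is precisely the subring of constant $\integers[\frac{1}{2}]$-valued functions inside $C(K)$, which is a copy of $\integers[\frac{1}{2}]$ contained in $Im(\Phi)$.

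There is no genuine obstacle in this lemma; the only things to keep track of are that part (2) uses the nonemptiness of $K$ (otherwise $C(K)=\{0\}$) and that the embedding $\integers[\frac{1}{2}]\hookrightarrow R$ is exactly where the standing hypothesis that $\V{R}$ is nontrivial enters. Note also that $\Phi$ itself need not be injective for a general subset $K\subseteq\V{R}$, even though $\hat{~}\colon R\to C(\V{R})$ is assumed injective; this is not needed for the statement.
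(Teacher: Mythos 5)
Your proof is correct and follows essentially the same route as the paper's: a pointwise verification that $\Phi$ respects addition and multiplication using that each $\alpha\in K$ is a ring homomorphism, and the observation that the elements $\frac{m}{2^n}\cdot 1_R$ map to the corresponding constant functions, yielding a copy of $\integers[\frac{1}{2}]$ in $Im(\Phi)$. Your additional remarks (continuity of $\hat a$ for well-definedness, injectivity of $q\mapsto q\cdot 1_R$ via a chosen $\alpha\in\V{R}$, and the need for $K\neq\emptyset$) are sound refinements of details the paper leaves implicit.
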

\begin{proof}
(1) This is clear. Let $a,b\in R$, then for each $\alpha\in K$ we have
\[
    \begin{array}{lll}
    \Phi(a+b)(\alpha) & = & \widehat{(a+b)}|_K(\alpha)\\
     & = & \alpha(a+b)\\
     & = & \alpha(a)+\alpha(b)\\
     & = & \hat{a}|_K(\alpha)+\hat{b}|_K(\alpha)\\
     & = & \Phi(a)(\alpha)+\Phi(b)(\alpha).
    \end{array}
\]
Similarly $\Phi(a\cdot b)=\Phi(a)\cdot\Phi(b)$. 

\noindent (2) Since
$\V{R}$ consists of unitary homomorphisms,
$\hat{1}(\alpha)=\alpha(1)=1$, so the constant function
$1\in\Phi(R)$. Moreover for each $m\in\integers$ and
$n\in\naturals$, $\frac{m}{2^n}\in R$ and $\hat{\frac{m}{2^n}}$ is
the constant function $\frac{m}{2^n}$ which belongs to $Im(\Phi)$,
so $\integers[\frac{1}{2}]\subseteq Im(\Phi)$.
\end{proof}
\subsection*{The topology $\T{K}$}
Let $K\subseteq\V{R}$. To any $\alpha \in K$ we
associate a seminorm $\rho_{\alpha}$ on $C(K)$ by defining
$\rho_{\alpha}(f):=|f(\alpha)|$ for $f\in C(K)$. Note that
$\rho_{\alpha}(fg)=\rho_{\alpha}(f)\rho_{\alpha}(g)$, so
$\rho_{\alpha}$ is a multiplicative seminorm. The family of
seminorms $\mathcal{F}_K=\{\rho_{\alpha}:\alpha\in K\}$ thus induces
an lmc-topology on $C(K)$.

Similarly, The restriction of
$\rho_{\alpha}$ to $\Phi(R)$ induces a multiplicative ring-seminorm on
$R$ by defining $\rho_{\alpha}(a):=|\hat{a}(\alpha)|=|\alpha(a)|$
for $a \in R$. Thus family of ring-seminorms $\mathcal{F}_K$ induces a
topology $\T{K}$ on $R$.

To ease the notation we shall denote
the neighborhoods by
\[U_{\rho_{\alpha_1},\dots,\rho_{\alpha_m}}^{\epsilon}(a): = U_{\alpha_1,\dots,\alpha_m}^{\epsilon}(a)\>.\]
\begin{rem}\label{FKTK}
 $\T{K}$ is the coarsest topology on $R$ for which all $\alpha\in K$ are
continuous.  $\T{K}$ is also the coarsest topology on $R$, for which
$\Phi$ is continuous. This is clear, because
$\rho_{\alpha}(a)=\rho_{\alpha}(\Phi(a))$ for each $a\in R$. 

We note for future reference that the topology generated by
$\mathcal{F}_K$ on $C(K)$ is Hausdorff \cite[Proposition
1.8]{BCRBK}.
\end{rem}
\begin{thm}\label{ZD-H-Ker}
Let $K\subseteq\V{R}$ and $\Phi:R\rightarrow C(K)$ be the map defined in Lemma \ref{phidef}.
The following are equivalent:
\begin{enumerate}
    \item{$K$ is Zariski dense,}
    \item{$\ker\Phi=\{0\}$,}
    \item{$\T{K}$ is a Hausdorff topology.}
\end{enumerate}
\end{thm}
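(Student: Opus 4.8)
The plan is to prove the cycle of implications $(1)\Rightarrow(2)\Rightarrow(3)\Rightarrow(1)$, using the description of $\T{K}$ via the multiplicative ring-seminorms $\rho_\alpha(a)=|\alpha(a)|$ and the standing hypothesis that $\hat{~}:R\to C(\V{R})$ is injective.

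\textbf{$(1)\Rightarrow(2)$.} Observe that $\ker\Phi=\{a\in R:\alpha(a)=0\text{ for all }\alpha\in K\}$ is an ideal of $R$, and by definition $K\subseteq\Z{\ker\Phi}$. If $\ker\Phi\neq\{0\}$, then $\ker\Phi$ is a nonzero ideal; I claim it is proper. Indeed, if $1\in\ker\Phi$ then $\alpha(1)=0$ for every $\alpha\in K$, contradicting that elements of $K\subseteq\V{R}$ are unitary homomorphisms (so $\alpha(1)=1$) — here one uses that $K$ is nonempty, which follows from Zariski density since $\V{R}$ is assumed nontrivial. Thus $\ker\Phi$ is a proper ideal with $K\subseteq\Z{\ker\Phi}$, so $K$ is contained in the variety of a proper ideal, contradicting Zariski density (by the characterization recalled just before the Example in Section~\ref{Background}, using injectivity of $a\mapsto\hat a$). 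Hence $\ker\Phi=\{0\}$.

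\textbf{$(2)\Rightarrow(3)$.} Suppose $\ker\Phi=\{0\}$, so $\Phi$ is injective. A locally convex topology generated by a family $\mathcal{F}$ of seminorms is Hausdorff if and only if for every $x\neq 0$ there is some seminorm in $\mathcal{F}$ not vanishing at $x$. So take $a\in R$ with $a\neq0$; then $\Phi(a)\neq0$ in $C(K)$, i.e. there is $\alpha\in K$ with $\hat a(\alpha)=\alpha(a)\neq0$, hence $\rho_\alpha(a)=|\alpha(a)|>0$. Therefore the family $\mathcal{F}_K$ separates points of $R$ and $\T{K}$ is Hausdorff. (Equivalently, one can phrase this as: $\T{K}$ is Hausdorff iff the canonical map $R\to C(K)$ with $C(K)$ carrying the $\mathcal{F}_K$-topology — which is Hausdorff by Remark~\ref{FKTK} — is injective, and that map is precisely $\Phi$.)

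\textbf{$(3)\Rightarrow(1)$.} Contrapositive: assume $K$ is not Zariski dense. Then by the characterization recalled before the Example, $K\subseteq\Z{I}$ for some proper ideal $I$. Pick $a\in I$ with $a\neq0$ (possible: a proper ideal in a ring that, by injectivity of $\hat{~}$ applied to $\V{R}\neq\emptyset$, has $0\neq1$; and if every nonzero element avoided $I$ then $I=\{0\}=\Z{\V{R}}$ would force $K$ Zariski dense after all — more carefully, $I$ proper and nonzero is exactly what fails Zariski density, so such $a$ exists unless $I=\{0\}$, which is impossible since $\{0\}$ is not of the form needed). For this $a$, $\alpha(a)=0$ for all $\alpha\in K$, so $\rho_\alpha(a)=0$ for every $\alpha\in K$; thus $a$ lies in every basic $\T{K}$-neighborhood of $0$, and since $a\neq0$, $\T{K}$ is not Hausdorff.

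The main subtlety is bookkeeping around the degenerate cases — ensuring $K\neq\emptyset$ and $0\neq1$ in $R$ — both of which are handled by the two standing assumptions ($\V{R}$ nontrivial, $\hat{~}$ injective) plus the Zariski-density characterization stated right before the Example; the rest is the standard dictionary between injectivity of the evaluation map and Hausdorffness of a seminorm-generated topology. No genuinely hard analytic input is needed.
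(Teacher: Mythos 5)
Your proof is correct and follows essentially the same route as the paper's: $(1)\Rightarrow(2)$ via $K\subseteq\Z{\ker\Phi}$, $(2)\Rightarrow(3)$ via separation of points (the paper phrases this as $\Phi$ being a topological embedding into the Hausdorff space $(C(K),\mathcal{F}_K)$), and $(3)\Rightarrow(1)$ by exhibiting a nonzero element on which every $\rho_\alpha$, $\alpha\in K$, vanishes (the paper uses two distinct elements $a,b$ of the nontrivial ideal, which is the same idea). Your extra bookkeeping about the witnessing ideal being nonzero and proper is harmless and, if anything, slightly more careful than the paper's own wording.
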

\begin{proof}
(1)$\Rightarrow$(2) In contrary, suppose that $\ker\Phi\neq\{0\}$ and let $0\neq a\in\ker\Phi$.
Then by definition, $\hat{a}(\alpha)=0$ for all $\alpha\in K$.
This implies $K\subseteq\Z{a}$ which contradicts the assumption that $K$ is Zariski dense.\\
(2)$\Rightarrow$(3)
 Since $\Phi$ is injective,
by Remark \ref{FKTK}, $\Phi$ is a topological embedding. This implies that $\T{K}$ is Hausdorff as well.\\
(3)$\Rightarrow$(1) Suppose that $K$ is not Zariski dense. Than $K\subseteq\Z{I}$ for a nontrivial ideal of $R$.
Take $a,b\in I$, $a\neq b$ and let $U$ be
an open set in $\T{K}$, containing $a$. By definition, there exist $\alpha_1,\dots,\alpha_m\in K$ and $\epsilon>0$
such that
\[
    U_{\alpha_1,\dots,\alpha_m}^{\epsilon}(a)\subseteq U.
\]
For each $i=1,\dots,m$, $\rho_{\alpha_i}(a-b)=|\alpha_i(b-a)|=0$, therefore
$b\in U_{\alpha_1,\dots,\alpha_m}^{\epsilon}(a)$ and hence $b\in U$. This shows that
$\T{K}$ is not Hausdorff, a contradiction.
\end{proof}
\subsection*{The topology $\|.\|_{K}$} Assume now that $K$ is compact.
In this case, $C(K)$ carries a natural norm topology, the norm
defined by $\norm{K}{f}=\sup_{\alpha\in K}|f(\alpha)|$.
The inequalities $\norm{K}{f+g}\leq\norm{K}{f}+\norm{K}{g}$ and
$\norm{K}{fg}\leq\norm{K}{f}\norm{K}{g}$ implies the continuity of
addition and multiplication on $(C(K),\norm{K}{})$.
\begin{lemma}\label{denseness}
If $K\subseteq\V{R}$ is compact, then $\Phi(R)$ is dense in $(C(K),\norm{K}{})$.
\end{lemma}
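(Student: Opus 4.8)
The plan is to apply the Stone--Weierstrass theorem to the subalgebra $\Phi(R)\subseteq C(K)$. Since $\V{R}$ carries the subspace topology inherited from $\reals^R$ with the product topology, it is Hausdorff, and hence so is the compact set $K$; thus $(C(K),\norm{K}{})$ is a commutative Banach algebra of continuous functions on a compact Hausdorff space, the setting in which Stone--Weierstrass applies.

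First, by Lemma \ref{phidef}(1), $\Phi(R)$ is a subring of $C(K)$, and by Lemma \ref{phidef}(2) it contains the constant function $1$, indeed a copy of $\integers[\tfrac12]$. Let $A:=\cl{\norm{K}{}}{\Phi(R)}$ be its closure. Since $\integers[\tfrac12]$ is dense in $\reals$ and all constants $\tfrac{m}{2^n}$ lie in $\Phi(R)$, the closure $A$ contains every real constant function; being a closed subring containing all real constants, $A$ is closed under multiplication by arbitrary real scalars, hence is a closed $\reals$-subalgebra of $C(K)$.

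Second, I claim $\Phi(R)$ separates the points of $K$. If $\alpha,\beta\in K$ with $\alpha\neq\beta$, then, being distinct homomorphisms $R\to\reals$, there is some $a\in R$ with $\alpha(a)\neq\beta(a)$, that is, $\Phi(a)(\alpha)=\hat a(\alpha)\neq\hat a(\beta)=\Phi(a)(\beta)$. Hence $A\supseteq\Phi(R)$ separates the points of $K$ as well.

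Thus $A$ is a closed $\reals$-subalgebra of $C(K)$ containing the constants and separating points of the compact Hausdorff space $K$; by the Stone--Weierstrass theorem, $A=C(K)$, i.e. $\Phi(R)$ is dense in $(C(K),\norm{K}{})$. The only step that requires any care is the passage to the closure in order to recover all real scalar multiples, since in this section $R$ is assumed to be only a $\integers[\tfrac12]$-algebra rather than an $\reals$-algebra; once this is observed, the argument is routine.
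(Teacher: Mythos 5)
Your proof is correct and follows essentially the same route as the paper: both pass to the closure $\cl{\norm{K}{}}{\Phi(R)}$, observe that it is an $\reals$-subalgebra containing the constants (via the density of $\integers[\tfrac12]$ in $\reals$) and separates points, and conclude by Stone--Weierstrass. Your added care in justifying that the closure absorbs real scalars and in spelling out why distinct homomorphisms are separated is a welcome but inessential refinement of the paper's argument.
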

\begin{proof}
Let $\mathcal{A}=\cl{}{\Phi(R)}$. We make use of Stone-Weierstrass
Theorem to show that $\mathcal{A}=C(K)$. $K$ is compact and
Hausdorff, so once we show that $\mathcal{A}$ is an $\reals$-algebra
which contains all constant functions and separates points of $K$,
we are done (See \cite[Theorem 44.7]{GTW}). Note that $\mathcal{A}$
contains all constant functions because
$\integers[\frac{1}{2}]\subset\Phi(R)$ which is dense in $\reals$.
Since addition and multiplication are continuous, they extend
continuously to $\mathcal{A}$, therefore, $\mathcal{A}$ is also
closed under addition and multiplication. Moreover, $\mathcal{A}$
separates points of $K$, because $\Phi(R)$ does. Hence, by
Stone-Weierstrass Theorem $\mathcal{A}=C(K)$.
\end{proof}

\begin{rem}\label{pullback}~
\begin{itemize}
	\item[1.]{Corresponding to  Remark \ref{FKTK}, defining the ring-norm $\norm{K}{}$ on $R$ by $\norm{K}{a}=\norm{K}{\hat{a}}$ induces
	a topology which is the coarsest topology such that $\Phi$ is continuous.
	But $\norm{K}{}$ is not necessarily a norm, unless when $\Phi$ is injective which by
	Theorem \ref{ZD-H-Ker} is equivalent to $K$ being Zariski dense.
	}
	\item[2.]{For any $\alpha\in K$, the evaluation map at $\alpha$, over
	$C(K,\norm{K}{})$ satisfies the inequality
	$|f(\alpha)|=\rho_{\alpha}(f)\leq\norm{K}{f}$, so it is continuous for each $\alpha\in K$. This
	observation shows that each $\T{K}$-open set is also
	$\norm{K}{}$-open, i.e., $\norm{K}{}$-topology is finer than
	$\T{K}$. We show that if $K$ is infinite, then
	$\norm{K}{}$-topology is strictly finer than $\T{K}$.}
\end{itemize}
\end{rem}

\begin{prop}\label{SrctFin}
If $K$ is an infinite, compact subset of $\V{R}$, then
$\norm{K}{}$-topology is strictly finer than $\T{K}$.
\end{prop}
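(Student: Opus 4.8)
The plan is to show that there is a $\norm{K}{}$-open set which is not $\T{K}$-open. Since $\T{K}$-open sets are $\norm{K}{}$-open (Remark \ref{pullback}(2)), strictness will follow once we exhibit one witness. The natural candidate is the unit ball $B=\{a\in R:\norm{K}{a}<1\}$, or more precisely a translate of it by a point $a_0$ with $\norm{K}{a_0}$ small; I would argue that no basic $\T{K}$-neighborhood $U^{\epsilon}_{\alpha_1,\dots,\alpha_m}(a_0)$ can be contained in $B$. A basic $\T{K}$-neighborhood of $a_0$ constrains the values $\alpha_i(a-a_0)$ at only finitely many points $\alpha_1,\dots,\alpha_m\in K$, whereas membership in $B$ constrains $\sup_{\alpha\in K}|\hat a(\alpha)|$. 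So the strategy is: starting from $a_0=0$, find, for each finite set $F=\{\alpha_1,\dots,\alpha_m\}\subseteq K$ and each $\epsilon>0$, an element $a\in R$ with $\rho_{\alpha_i}(a)\le\epsilon$ for all $i$ but $\norm{K}{a}\ge 1$; this shows $U^{\epsilon}_{\alpha_1,\dots,\alpha_m}(0)\not\subseteq B$, hence $B$ is not a $\T{K}$-neighborhood of $0$, hence not $\T{K}$-open, while it is plainly $\norm{K}{}$-open.

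The construction of such an $a$ is where infiniteness of $K$ enters and is the main obstacle, since we must produce an \emph{element of $R$}, not merely a continuous function on $K$. Here I would use Lemma \ref{denseness}: $\Phi(R)$ is $\norm{K}{}$-dense in $C(K)$. Because $K$ is infinite and compact Hausdorff, pick a point $\beta\in K$ not in $F$ (possible as $K$ is infinite); by the Tietze/Urysohn machinery underlying $C(K)$ there is $g\in C(K)$ with $g(\alpha_i)=0$ for $i=1,\dots,m$, $g(\beta)=2$, and $0\le g\le 2$, so $\norm{K}{g}=2$. Now approximate: choose $a\in R$ with $\norm{K}{\hat a-g}<\min(\epsilon,\tfrac12)$. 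Then $\rho_{\alpha_i}(a)=|\hat a(\alpha_i)|\le|\hat a(\alpha_i)-g(\alpha_i)|+|g(\alpha_i)|<\epsilon$ for each $i$, while $\norm{K}{a}=\norm{K}{\hat a}\ge \norm{K}{g}-\norm{K}{\hat a-g}>2-\tfrac12>1$. Thus $a\in U^{\epsilon}_{\alpha_1,\dots,\alpha_m}(0)$ but $a\notin B$, as required.

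Finally I would assemble the argument: the display above shows that every basic $\T{K}$-neighborhood of $0$ meets the complement of $B$, so $0$ is not a $\T{K}$-interior point of $B$; since $0\in B$, the set $B$ is $\norm{K}{}$-open but not $\T{K}$-open, and therefore the $\norm{K}{}$-topology is strictly finer than $\T{K}$. One minor point to check along the way is that $B$ really is $\norm{K}{}$-open, which is immediate from the triangle inequality for $\norm{K}{}$; and that the function $g$ lies in $C(K)$ with the stated properties, which is standard for compact Hausdorff $K$ (Urysohn's lemma, using that finite sets are closed). I expect the separation-of-points hypothesis on $\Phi(R)$, already invoked in Lemma \ref{denseness}, to be exactly what guarantees there is no hidden collapse forcing $\hat a$ to vanish at $\beta$ whenever it is small at the $\alpha_i$.
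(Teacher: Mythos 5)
Your proposal is correct and follows essentially the same route as the paper: both use Urysohn's lemma on the compact Hausdorff (hence normal) space $K$ to build a continuous function vanishing at the finitely many $\alpha_i$ but large at some $\beta\in K\setminus\{\alpha_1,\dots,\alpha_m\}$ (which exists since $K$ is infinite), then invoke Lemma \ref{denseness} to approximate it by an element of $R$, concluding that a $\norm{K}{}$-ball around $0$ contains no basic $\T{K}$-neighborhood of $0$. The only differences (normalizing the bump to $2$ instead of $1$, using the unit ball rather than $N_{\epsilon}(0)$ for $0<\epsilon<1$) are cosmetic.
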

\begin{proof}
Let $\alpha_1,\dots,\alpha_m\in K$ and $0<\epsilon<1$. We claim that
there exists $a\in R$ such that $a\in
U_{\alpha_1,\dots,\alpha_m}^{\epsilon}(0)$ and
$\norm{K}{a}>\epsilon$. Note that $\V{R}$ is Hausdorff and so is
$K$. Compactness of $K$ implies that $K$ is a normal space. Take
$A=\{\alpha_1,\dots,\alpha_m\}$ and $B=\{\beta\}$, where $\beta\in
K\setminus A$. By Urysohn's lemma, there exists a continuous
function $f:K\rightarrow[0,1]$ such that $f(A)=0$ and $f(\beta)=1$.
For $\delta<\min\{\epsilon,1-\epsilon\}$, there exists $a\in R$ such
that $\norm{K}{f-a}<\delta$ by Lemma \ref{denseness}. Clearly $a\in
U_{\alpha_1,\dots,\alpha_m}^{\epsilon}(0)$ and
$|\hat{a}(\beta)|>\epsilon$ which implies that
$\norm{K}{a}>\epsilon$ which completes the proof of the claim. 

Let $N_{\epsilon}(0)=\{a\in R : \norm{K}{a}<\epsilon\}$ be an open
ball around $0$ in $\norm{K}{}$ for $0<\epsilon<1$. We show that
$N_{\epsilon}(0)$ does not contain any open neighborhood of $0$ in
$\T{K}$. In contrary, suppose that $0\in
U_{\alpha_1,\dots,\alpha_m}^{\delta}(0)\subseteq N_{\epsilon}(0)$.
Obviously $\delta\leq\epsilon$ and so there exists $a\in
U_{\alpha_1,\dots,\alpha_m}^{\delta}(0)$ such that $a\not\in
N_{\epsilon}(0)$ which is a contradiction. So, $N_{\epsilon}(0)$ is
not open in $\T{K}$ and hence, $\norm{K}{}$-topology is strictly
finer that $\T{K}$.
\end{proof}
\section{Closures of $\sum R^{2d}$ in $\mathcal{T}_{K}$ and $\|\cdot\|_{K}$}\label{MP}
In this section, we compute the closure $\ringsop{R}{2d}$ in the
two topologies defined in the previous section. In
particular, for compact $K\subseteq\V{R}$, we show that
\[
	\cl{\norm{K}{}}{\ringsop{R}{2d}}=\cl{\T{K}}{\ringsop{R}{2d}},
\]
although for infinite $K$, the $\norm{K}{}$-topology is strictly finer than $\T{K}$ on
$R$ by Proposition \ref{SrctFin}. Let
\[
    C^+(K):=\{f:K\rightarrow\reals~:~f\textrm{ is continuous and }\forall\alpha\in K\quad f(\alpha)\ge0\},
\]
denote the set of nonnegative real valued continuous functions over
$K$ and \[\pos{K}:=\{a\in R:\hat{a}\in C^+(K)\}\>.\]
\begin{prop}\label{Closeness}
$\pos{K}$ is closed in $\T{K}$. If $K$ is compact, then $\pos{K}$ is also closed in $\norm{K}{}$-topology.
\end{prop}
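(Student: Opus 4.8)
The plan is to show that the complement of $\pos{K}$ is open in each topology. Suppose $a \in R \setminus \pos{K}$; then $\hat{a} \notin C^+(K)$, so there is some $\beta \in K$ with $\hat{a}(\beta) = \beta(a) < 0$. Set $\epsilon := |\beta(a)| = -\beta(a) > 0$.

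\emph{For $\T{K}$:} I claim the basic open neighborhood $U_{\beta}^{\epsilon/2}(a)$ is disjoint from $\pos{K}$. Indeed, if $b \in U_{\beta}^{\epsilon/2}(a)$, then $\rho_{\beta}(a - b) = |\beta(a) - \beta(b)| < \epsilon/2$, so $\beta(b) < \beta(a) + \epsilon/2 = -\epsilon/2 < 0$, whence $\hat{b} \notin C^+(K)$ and $b \notin \pos{K}$. Thus every point of $R \setminus \pos{K}$ has a $\T{K}$-open neighborhood contained in $R \setminus \pos{K}$, so $\pos{K}$ is $\T{K}$-closed.

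\emph{For $\norm{K}{}$ when $K$ is compact:} The same $a$ and $\beta$ work. Since the $\norm{K}{}$-topology is finer than $\T{K}$ (Remark \ref{pullback}(2)), the neighborhood $U_{\beta}^{\epsilon/2}(a)$ is also $\norm{K}{}$-open, and the argument above shows it misses $\pos{K}$; hence $\pos{K}$ is $\norm{K}{}$-closed as well. Alternatively, one can argue directly: if $\norm{K}{a - b} < \epsilon/2$ then $|\beta(a) - \beta(b)| \le \norm{K}{a-b} < \epsilon/2$, giving $\beta(b) < 0$ again.

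There is no real obstacle here; the only thing to be slightly careful about is that we only need \emph{one} seminorm $\rho_{\beta}$ in the defining neighborhood, so the argument is uniform and does not require compactness for the $\T{K}$ statement — compactness enters only to make $\norm{K}{}$ well-defined as a (semi)norm. The key observation is simply that negativity of $\beta(a)$ is an open condition in $\beta(\cdot)$, and both topologies make the evaluation maps $\alpha \mapsto \alpha(a)$ (equivalently $\rho_{\alpha}$) continuous.
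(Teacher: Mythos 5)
Your proof is correct and is essentially the paper's argument in dual form: the paper writes $\pos{K}=\bigcap_{\alpha\in K}e_{\alpha}^{-1}([0,\infty))$ and invokes continuity of the evaluation maps $e_\alpha$ in each topology, while you unwind that same continuity into an explicit $\epsilon$-neighborhood showing the complement is open. Both rest on the single observation that each $\rho_\beta$ (equivalently $e_\beta$) is continuous for $\T{K}$ and, a fortiori, for the finer $\norm{K}{}$-topology.
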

\begin{proof}
For each $\alpha\in K$, let $e_{\alpha}(f)=f(\alpha)$ be the evaluation map. Then $e_{\alpha}^{-1}([0,\infty))$ is closed, by continuity of $e_{\alpha}$ in $\T{K}$. 
Therefore $\pos{K}=\bigcap_{\alpha\in K}e_{\alpha}^{-1}([0,\infty))$ is closed.
If $K$ is compact, then, again $e_{\alpha}$ is $\norm{K}{}$-continuous. Therefore $\pos{K}$ is also closed with respect to $\norm{K}{}$.
\end{proof}
\begin{thm}\label{SosRng}
For any compact set $K\subseteq\V{R}$ and integer $d\ge1$, $\cl{\norm{K}{}}{\ringsop{R}{2d}}=\pos{K}$.
\end{thm}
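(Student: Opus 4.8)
The inclusion $\cl{\norm{K}{}}{\ringsop{R}{2d}}\subseteq\pos{K}$ is easy: each $f\in\ringsop{R}{2d}$ clearly satisfies $\hat f\ge0$ on $K$ (a sum of $2d$-powers is evaluated by $\alpha$ to a sum of $2d$-powers of reals), so $\ringsop{R}{2d}\subseteq\pos{K}$, and $\pos{K}$ is $\norm{K}{}$-closed by Proposition \ref{Closeness}. The content is the reverse inclusion $\pos{K}\subseteq\cl{\norm{K}{}}{\ringsop{R}{2d}}$, and the plan is to reduce it to an approximation statement inside $C(K)$ via the homomorphism $\Phi$ and then apply Stone--Weierstrass together with the classical fact that a continuous nonnegative function on a compact space is a uniform limit of sums of $2d$-th powers of continuous functions.

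\emph{Step 1 (transport to $C(K)$).} Fix $a\in\pos{K}$, so $\hat a=\Phi(a)\in C^+(K)$, and fix $\varepsilon>0$. Since $\norm{K}{b}=\norm{K}{\hat b}$ for $b\in R$, it suffices to produce $b\in\ringsop{R}{2d}$ with $\norm{K}{\hat a-\hat b}<\varepsilon$, i.e.\ to approximate $\hat a$ uniformly on $K$ by the image under $\Phi$ of a sum of $2d$-powers.

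\emph{Step 2 (approximate in $C(K)$).} First approximate $\hat a$ by a sum of $2d$-powers of \emph{continuous} functions. Here one uses that any $g\in C^+(K)$ with $K$ compact is a uniform limit of finite sums $\sum_i h_i^{2d}$ with $h_i\in C(K)$: for instance write $g+\delta$ in place of $g$, note $(g+\delta)^{1/2d}\in C(K)$, so $g+\delta=((g+\delta)^{1/2d})^{2d}$ is already a single $2d$-power, and $\norm{K}{(g+\delta)-g}=\delta$. Thus in fact $\hat a$ is within $\delta$ of a single $2d$-power $h^{2d}$ with $h\in C(K)$.

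\emph{Step 3 (pull back through $\Phi$).} By Lemma \ref{denseness}, $\Phi(R)$ is $\norm{K}{}$-dense in $C(K)$, so choose $c\in R$ with $\norm{K}{h-\hat c}$ small. Then $\hat c^{2d}=\widehat{c^{2d}}$ approximates $h^{2d}$ uniformly on $K$, since $t\mapsto t^{2d}$ is uniformly continuous on the bounded set $h(K)$ (equivalently, estimate $\norm{K}{h^{2d}-\hat c^{2d}}$ using the factorization $x^{2d}-y^{2d}=(x-y)\sum_{j} x^j y^{2d-1-j}$ and the bound $\norm{K}{\hat c}\le\norm{K}{h}+\norm{K}{h-\hat c}$). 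Combining with Step 2 and the triangle inequality, for a suitable choice of $\delta$ and of the approximation error we get $\norm{K}{\hat a-\widehat{c^{2d}}}<\varepsilon$, and $c^{2d}\in\ringsop{R}{2d}$. Hence $a\in\cl{\norm{K}{}}{\ringsop{R}{2d}}$, as desired.

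\emph{Main obstacle.} The routine part is the two $\varepsilon$--$\delta$ estimates in Steps 2 and 3; the only place requiring care is making sure the power map $t\mapsto t^{2d}$ interacts well with the approximations — i.e.\ controlling $\norm{K}{h^{2d}-\hat c^{2d}}$ in terms of $\norm{K}{h-\hat c}$ — which needs a uniform bound on $\norm{K}{h}$ and $\norm{K}{\hat c}$, available because $K$ is compact and $h$ is continuous. Everything else is packaged in Lemma \ref{denseness} (Stone--Weierstrass) and Proposition \ref{Closeness}. I would organize the write-up so that Steps 2 and 3 are merged: directly approximate $\hat a$ by $\hat c^{2d}$ by first adding $\delta$, taking the $2d$-th root, and then invoking density of $\Phi(R)$.
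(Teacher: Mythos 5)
Your proposal is correct and follows essentially the same route as the paper: take the $2d$-th root of $\hat a$ in $C(K)$, approximate it by some $\hat b$ with $b\in R$ using the Stone--Weierstrass density of $\Phi(R)$ (Lemma \ref{denseness}), and pull back via continuity of $f\mapsto f^{2d}$, with the first inclusion handled by Proposition \ref{Closeness}. The only (harmless) difference is your perturbation by $\delta$ before taking the root, which the paper avoids since $t\mapsto t^{1/2d}$ is already continuous on $[0,\infty)$, so $\sqrt[2d]{\hat a}\in C(K)$ directly.
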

\begin{proof}
Since $\ringsop{R}{2d}\subseteq\pos{K}$ and $\pos{K}$ is closed, clearly $\cl{\norm{K}{}}{\ringsop{R}{2d}}\subseteq\pos{K}$.
To show the reverse inclusion, let $a\in\pos{K}$ and $\epsilon>0$ be given. Since $\hat{a}\ge0$ on $K$, $\sqrt[2d]{\hat{a}}\in C(K)$.
Continuity of multiplication implies the continuity of the map $f\mapsto f^{2d}$. Therefore, there exists $\delta>0$ such that
$\norm{K}{\sqrt[2d]{\hat{a}}-f}<\delta$ implies $\norm{K}{\hat{a}-f^{2d}}<\epsilon$.
Using Lemma \ref{denseness}, there is $b\in R$ such that $\norm{K}{\sqrt[2d]{\hat{a}}-\hat{b}}<\delta$ and so $\norm{K}{\hat{a}-\hat{b}^{2d}}<\epsilon$.
By definition, $\hat{a}-\hat{b}^{2d}=\Phi(a-b^{2d})$ and hence $\norm{K}{a-b^{2d}}<\epsilon$. Therefore, any neighborhood of $a$ has nonempty intersection
with $\ringsop{R}{2d}$ which proves the reverse inclusion $\pos{K}\subseteq\cl{\norm{K}{}}{\ringsop{R}{2d}}$.
\end{proof}
\begin{crl}\label{M4Cmpt}
Let $K$ be a compact subset of $\V{R}$, $d\ge1$ an integer. Assume that $L:R\rightarrow\reals$ is $\norm{K}{}$-continuous, $\integers[\frac{1}{2}]$-linear map,
such that $L(a^{2d})\ge0$ for all $a\in R$, then there exists a Borel Measure $\mu$ on $K$ such that $\forall a\in R ~ L(a)=\int_K \hat{a}~d\mu$.
\end{crl}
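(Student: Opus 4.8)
\textbf{Proof plan for Corollary \ref{M4Cmpt}.}
The plan is to apply the general Haviland-type Theorem \ref{Haviland} to the
$\reals$-algebra $R$, the Hausdorff space $X = K$, and the homomorphism
$\Phi : R \to C(K)$ from Lemma \ref{phidef}. The hypothesis of that theorem
requires a distinguished element $p \in R$ with $\hat p \ge 0$ on $K$ and
$\hat p^{-1}([0,i])$ compact for every $i$; since $K$ is already compact we may
simply take $p = 1$, so that $\hat p \equiv 1$ and every sublevel set is either
empty or all of $K$, hence compact. Thus the only real content is to verify
that $L$ is nonnegative on $\pos{K} = \{a \in R : \hat a \ge 0 \text{ on } K\}$;
granted that, Theorem \ref{Haviland} produces a Borel measure $\mu$ on $K$ with
$L(a) = \int_K \hat a \, d\mu$ for all $a \in R$, which is exactly the assertion.

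The heart of the argument is therefore the inequality $L(\pos{K}) \subseteq
\reals^+$. First I would invoke Theorem \ref{SosRng}, which identifies
$\cl{\norm{K}{}}{\ringsop{R}{2d}} = \pos{K}$. So given $a \in \pos{K}$, there is
a sequence (or net) $c_n \in \ringsop{R}{2d}$ with $\norm{K}{a - c_n} \to 0$.
Each $c_n$ is a finite sum $\sum_j b_{n,j}^{2d}$, so the hypothesis
$L(b^{2d}) \ge 0$ together with $\integers[\frac12]$-linearity of $L$ gives
$L(c_n) \ge 0$ for every $n$. Since $L$ is $\norm{K}{}$-continuous and
$\integers[\frac12]$-linear (hence in particular additive and continuous), we
have $L(c_n) \to L(a)$, and therefore $L(a) = \lim_n L(c_n) \ge 0$. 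This
establishes $L(\pos{K}) \subseteq \reals^+$.

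The main obstacle — really a routine point to be careful about rather than a
deep one — is the interplay between $\integers[\frac12]$-linearity and
$\norm{K}{}$-continuity: one must check that a map which is additive,
$\integers[\frac12]$-homogeneous, and continuous for the seminorm $\norm{K}{}$
is automatically continuous as a map of $\reals$-vector spaces in the sense
needed to pass the limit through $L$, i.e. $L(c_n) \to L(a)$ whenever
$\norm{K}{c_n - a} \to 0$. This is immediate from $|L(c_n) - L(a)| = |L(c_n - a)|$
and continuity of $L$ at $0$. One should also note that Theorem \ref{Haviland}
is stated for $\reals$-algebras and linear functionals, whereas here $R$ is only
assumed to be a $\integers[\frac12]$-algebra and $L$ only $\integers[\frac12]$-linear;
the resolution is that $\pos{K}$, $\Phi$, and the argument above use only the
$\integers[\frac12]$-structure, and the measure-theoretic conclusion of
Haviland's theorem goes through verbatim once one has $L(\pos{K}) \subseteq
\reals^+$, since the proof of Theorem \ref{Haviland} in \cite{MPS} only uses
$\integers[\frac12]$-linearity together with density of $\integers[\frac12]$
in $\reals$. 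With these points dispatched, the corollary follows directly.
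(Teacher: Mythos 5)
Your first step --- establishing $L(\pos{K})\subseteq\reals^+$ by approximating $a\in\pos{K}$ in $\norm{K}{}$ by elements of $\ringsop{R}{2d}$ (via Theorem \ref{SosRng}) and passing the limit through the continuous additive map $L$ --- is exactly the paper's argument, and it is correct.

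The second step, however, has a genuine gap. You apply Theorem \ref{Haviland} directly to $R$ with $X=K$ and $p=1$, but that theorem is stated for an $\reals$-algebra $R$, an $\reals$-algebra homomorphism into $C(X)$, and an $\reals$-linear functional $L$; in Corollary \ref{M4Cmpt} the ring $R$ is only a $\integers[\frac{1}{2}]$-algebra and $L$ is only $\integers[\frac{1}{2}]$-linear, so the hypotheses are simply not met. Your proposed resolution --- that the proof in \cite{MPS} ``goes through verbatim'' with only $\integers[\frac{1}{2}]$-linearity --- is an unverified claim about an external proof, and it is not accurate as stated: the proof of Haviland's theorem extends a positive functional along real vector spaces (M.~Riesz extension / Riesz representation), which requires genuine $\reals$-linear structure that $R$ and $L$ do not possess. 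The missing construction, which is precisely what the paper supplies, is to push $L$ forward to $\bar{L}$ on $\hat{R}=\Phi(R)\subseteq C(K)$ (checking well-definedness using $L(\pos{K})\subseteq\reals^+$ applied to $\pm\hat{a}$ when $\hat{a}=0$), extend $\bar{L}$ $\reals$-linearly to the $\reals$-subalgebra $A$ of $C(K)$ generated by $\hat{R}$ --- here $\norm{K}{}$-continuity of $L$ together with density of $\integers[\frac{1}{2}]$ in $\reals$ is what makes this extension legitimate --- and only then invoke functional-analytic machinery on the honest $\reals$-algebra $C(K)$: the paper uses Lemma \ref{denseness}, Hahn--Banach, and the Riesz Representation Theorem (its Corollary \ref{M4NCmpt} applies Theorem \ref{Haviland} to $A$, not to $R$). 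Your argument becomes correct once you insert this transfer-and-extension step; as written, the appeal to Theorem \ref{Haviland} is not justified.
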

\begin{proof}
Let $\hat{R}:=\{\hat{a}:a\in R\}$ and define $\bar{L}:\hat{R}\rightarrow\reals$ by $\bar{L}(\hat{a})=L(a)$.

We prove if $\hat{a}\ge0$, then $L(a)\ge0$. To see this, let $\epsilon>0$ be given and find $\delta>0$ such that $\norm{K}{a-b}<\delta$ implies
$|L(a)-L(b)|<\epsilon$. Take $c_{\epsilon}\in R$ such that $\norm{K}{a-c_{\epsilon}^{2d}}<\delta$. Then
\[
    L(c_{\epsilon}^{2d})-\epsilon<L(a)<L(c_{\epsilon})+\epsilon,
\]
let $\epsilon\rightarrow0$, yields $L(a)\ge0$.

Note that $\bar{L}$ is well-defined, since $\hat{a}=0$, implies $\hat{a}\ge0$ and $-\hat{a}\ge0$, so $\bar{L}(\hat{a})\ge0$ and $\bar{L}(-\hat{a})\ge0$,
simultaneously and hence $\bar{L}(\hat{a})=0$.
$\norm{K}{}$-continuity of $L$ on $R$, implies $\norm{K}{}$-continuity of $\bar{L}$ on $\hat{R}$. Let $A$ be the $\reals$-subalgebra of $C(K)$, generated by
$\hat{R}$. Elements of $A$ are of the form $r_1\hat{a_1}+\dots+r_k\hat{a_k}$, where $r_i\in\reals$ and $a_i\in R$, for $i=1\dots,k$ and $k\ge1$.
$\bar{L}$ is continuously extensible to $A$ by
$\bar{L}(r\hat{a}):=r\bar{L}(\hat{a})$. By Lemma \ref{denseness}, $\hat{R}$ and hence $A$ is dense in $(C(X),\norm{K}{})$. Hahn-Banach Theorem gives a continuous
extension of $\bar{L}$ to $C(X)$. Denoting the extension again by $\bar{L}$, an easy verification shows that $\bar{L}(C^+(K))\subseteq\reals^+$.
Applying Riesz Representation Theorem, the result follows.
\end{proof}

\begin{rem}
For the special case $K=\reals^n$ and $R=\rx$, it follows from
\cite[Proposition 6.2]{Schm2} that the closure of $\sos$ with
respect to the finest lmc topology $\eta_0$ on $\rx$ is equal to
$\pos{\reals^n}$. Since $\T{\reals^n}$ is lmc and $\pos{\reals^n}$
is closed in $\T{\reals^n}$ we get
\[
    \pos{\reals^n}=\cl{\eta_0}{\sos}\subseteq\cl{\T{\reals^n}}{\sos}\subseteq\cl{\T{\reals^n}}{\pos{\reals^n}}=\pos{\reals^n}.
\]
In the next theorem, we show that a similar result holds for
arbitrary $K$ and the smaller set of sums of $2d$-powers
$\ringsop{R}{2d}\subset\ringsos{R}$.
\end{rem}
\begin{thm}\label{SosRngNC}
Let $K\subseteq\V{R}$ be a closed set and $d\ge1$, then $\cl{\T{K}}{\ringsop{R}{2d}}=\pos{K}$.
\end{thm}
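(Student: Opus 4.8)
The plan is to reduce the non-compact case to the compact one already handled in Theorem~\ref{SosRng}, by exhausting the closed set $K$ with compact pieces and playing off the two topologies against each other. First I would observe that the inclusion $\cl{\T{K}}{\ringsop{R}{2d}}\subseteq\pos{K}$ is immediate: $\ringsop{R}{2d}\subseteq\pos{K}$ since each $2d$-power is sent by every $\alpha\in K$ to a nonnegative real, and $\pos{K}$ is $\T{K}$-closed by Proposition~\ref{Closeness}. So the whole content is the reverse inclusion $\pos{K}\subseteq\cl{\T{K}}{\ringsop{R}{2d}}$.

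For the reverse inclusion, fix $a\in\pos{K}$ and a basic $\T{K}$-neighborhood $U_{\alpha_1,\dots,\alpha_m}^{\epsilon}(a)$; I must produce an element of $\ringsop{R}{2d}$ inside it. The key point is that a basic $\T{K}$-neighborhood only constrains the values of $\hat{a}$ at the finitely many points $\alpha_1,\dots,\alpha_m$. Let $K_0=\{\alpha_1,\dots,\alpha_m\}$, a finite — hence compact — subset of $\V{R}$. Since $a\in\pos{K}$ we certainly have $\hat a|_{K_0}\ge 0$, so $a\in\pos{K_0}$. Applying Theorem~\ref{SosRng} to the compact set $K_0$ gives $\pos{K_0}=\cl{\norm{K_0}{}}{\ringsop{R}{2d}}$, so there is $b\in R$ with $b^{2d}$ — or more precisely an element of $\ringsop{R}{2d}$, but the proof of Theorem~\ref{SosRng} actually produces a single $2d$-power $b^{2d}$ — satisfying $\norm{K_0}{a-b^{2d}}<\epsilon$. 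By definition $\norm{K_0}{a-b^{2d}}=\max_{1\le i\le m}|\alpha_i(a)-\alpha_i(b^{2d})|=\max_i\rho_{\alpha_i}(a-b^{2d})$, so $b^{2d}\in U_{\alpha_1,\dots,\alpha_m}^{\epsilon}(a)\cap\ringsop{R}{2d}$. Since the sets $U_{\alpha_1,\dots,\alpha_m}^{\epsilon}(a)$ form a neighborhood basis at $a$ for $\T{K}$, this shows $a\in\cl{\T{K}}{\ringsop{R}{2d}}$, completing the proof.

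The only subtlety — and the step I would be most careful with — is making sure the invocation of Theorem~\ref{SosRng} on the finite set $K_0$ is legitimate and that the norm $\norm{K_0}{}$ there is exactly the sup over $K_0$ of $|\hat{\,\cdot\,}|$, which restricts on the finitely many evaluation functionals $\alpha_i$ to precisely the quantities $\rho_{\alpha_i}$ defining the $\T{K}$-neighborhood. A finite subset of $\V{R}$ is compact (it is finite, and $\V{R}$ is Hausdorff), so Theorem~\ref{SosRng} applies verbatim; there is no need for $K_0$ to be Zariski dense, since Theorem~\ref{SosRng} has no such hypothesis. One should also note that $a\in\pos{K}$ is only used through $a\in\pos{K_0}$, i.e.\ $\hat a(\alpha_i)\ge 0$ for the chosen points; this is automatic. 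Thus there is really no serious obstacle: the argument is a localization of the compact-case result, and the whole theorem follows by passing from a single finite-point neighborhood to the neighborhood basis of $\T{K}$.
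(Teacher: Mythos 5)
Your proof is correct, and it takes a genuinely different (and arguably cleaner) route than the paper's. Both arguments hinge on the same first observation: a basic $\T{K}$-neighborhood $U^{\epsilon}_{\alpha_1,\dots,\alpha_m}(a)$ only constrains $\hat a$ at finitely many points, so the problem is local to the finite set $K_0=\{\alpha_1,\dots,\alpha_m\}$. From there the paper does \emph{not} invoke Theorem \ref{SosRng}; instead it first proves the claim under the extra hypothesis $\hat a>0$ on $K$ by an explicit construction (rescale $a$ by $2^{-2dm}$ so its values lie in $(0,1)$, Lagrange-interpolate the $2d$-th roots at the points $\alpha_i(b)$ by a polynomial, perturb its coefficients into $\integers[\frac{1}{2}]$ so that the interpolant can be evaluated inside $R$, and then undo the scaling), and afterwards removes the strict positivity by approximating a general $a\in\pos{K}$ with $a+\frac{1}{2^k}$. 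Your reduction to Theorem \ref{SosRng} applied to the compact (finite, discrete) set $K_0$ replaces all of this: Lemma \ref{denseness} gives density of $\Phi(R)$ in $C(K_0)\cong\reals^m$ via Stone--Weierstrass (the hypotheses hold since distinct homomorphisms are separated by elements of $R$ and $\integers[\frac{1}{2}]$ is dense in $\reals$), and neither Theorem \ref{SosRng} nor Lemma \ref{denseness} requires Zariski density, so the fact that $\norm{K_0}{}$ is merely a seminorm is harmless --- a point you correctly flag. What your approach buys is brevity and the elimination of the two-step positive/nonnegative reduction; what the paper's buys is self-containedness at this point in the argument (its interpolation is completely explicit and makes visible exactly where the $\integers[\frac{1}{2}]$-structure of $R$ is used, namely to realize the interpolating polynomial's coefficients inside the ring). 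Your identification of $\norm{K_0}{a-b^{2d}}$ with $\max_i\rho_{\alpha_i}(a-b^{2d})$ is exactly what ties the two topologies together, so the argument closes correctly.
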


\begin{proof}
Since $\ringsop{R}{2d}\subseteq\pos{K}$ and by Proposition \ref{Closeness}, $\pos{K}$ is closed, we have $\cl{\T{K}}{\ringsop{R}{2d}}\subseteq\pos{K}$.

To get the reverse inclusion, let $a\in\pos{K}$ be given. We show that any neighborhood of $a$ in $\T{K}$ has a nonempty intersection with $\ringsop{R}{2d}$.

\textit{Claim.} If $\hat{a}>0$ on $K$ then $a\in\cl{\T{K}}{\ringsop{R}{2d}}$.\\
To prove this, let $U$ be an open set, containing $a$. There exist $\alpha_1,\dots,\alpha_n\in K$ and $\epsilon>0$ such that
$a\in U_{\alpha_1,\dots,\alpha_n}^{\epsilon}(a)\subseteq U$. Chose $m\in\naturals$ such that $\max_{1\leq i\leq n}\alpha_i(a)<2^{2dm}$.
Now for $b=\frac{a}{2^{2dm}}$ we have $0<\alpha_i(b)<1$. By continuity of $f(t)=t^{2d}$, for each $i=1,\dots,n$ there exists $\delta_i>0$ such that for any
$t$, if $|t-\alpha_i(b)^{1/2d}|<\delta_i$, then $|t^{2d}-\alpha_i(b)|<\frac{\epsilon}{2^{2dm}}$. Take $\delta=\min_{1\leq i\leq n}\delta_i$.
Let $p(t)=\sum_{j=0}^N\lambda_jt^j$ be the real polynomial satisfying $p(\alpha_i(b))=\sqrt[2d]{\alpha_i(b)}$ for $i=1,\dots,n$.
Since $\integers[\frac{1}{2}]$ is dense in $\reals$ one can choose
$\tilde{\lambda}_j\in\integers[\frac{1}{2}]$, such that $|\sum_{j=1}^{N}\lambda_j\alpha_i(b)^j-\sum_{j=1}^{N}\tilde{\lambda}_j\alpha_i(b)^j|<\delta$,
for $i=1,\dots,n$. Let $c=\sum_{j=1}^{N}\tilde{\lambda}_jb^j\in R$. Then $|\alpha_i(b)-\alpha_i(c)^{2d}|<\frac{\epsilon}{2^{2dm}}$.
Multiplying by $2^{2dm}$, $|\alpha_i(a)-\alpha_i(2^mc)^{2d}|<\epsilon$ i.e. $\rho_{\alpha_i}(a-(2^mc)^{2d})<\epsilon$ for $i=1,\dots,n$. Therefore
$U_{\alpha_1,\dots,\alpha_n}^{\epsilon}(a)\cap\ringsop{R}{2d}\neq\emptyset$ and hence $a\in\cl{\T{K}}{\ringsop{R}{2d}}$ which completes the proof of the claim.

For an arbitrary $a\in\pos{K}$, and each $k\in\naturals$, $\widehat{(a+\frac{1}{2^k})}>0$ on $K$, so
$\forall k\in\naturals,~ a+\frac{1}{2^k}\in\cl{\T{K}}{\ringsop{R}{2d}}$. Letting $k\rightarrow\infty$, $\rho_{\alpha}(a+\frac{1}{2^k})\rightarrow\rho_{\alpha}(a)$,
we get $a\in\cl{\T{K}}{\ringsop{R}{2d}}$ and hence $\pos{K}\subseteq\cl{\T{K}}{\ringsop{R}{2d}}$ as desired.
\end{proof}
\begin{crl}\label{M4NCmpt}
Let $K$ be a closed subset of $\V{R}$ and $d\ge1$ an integer. Assume that there exists $p\in R$, such that $\hat{p}\ge0$ on $K$, $K_i=\hat{p}^{-1}([0,i])$ is 
compact for each $i$ and $L:R\rightarrow\reals$ is $\T{K}$-continuous, $\integers[\frac{1}{2}]$-linear map, and $L(a^{2d})\ge0$ for all
$a\in R$, then there exists a Borel Measure $\mu$ on $K$ such that $\forall a\in R ~ L(a)=\int_K \hat{a}~d\mu$.
\end{crl}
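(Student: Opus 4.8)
The plan is to prove directly that $L$ is a \emph{finite nonnegative combination of point-evaluations from $K$}; the measure is then atomic, and one recovers the statement (and can also route the last step through Theorem~\ref{Haviland}, as Corollary~\ref{M4Cmpt} is routed through Riesz). The first point is that $\T{K}$-continuity of $L$ forces a domination $|L(x)|\le C\max_{1\le j\le m}\rho_{\alpha_j}(x)=C\max_{1\le j\le m}|\alpha_j(x)|$ by finitely many of the generating seminorms, for some $\alpha_1,\dots,\alpha_m\in K$ and $C>0$. This is the standard fact for topologies defined by a family of seminorms; the one adjustment, since $R$ is a $\integers[\frac{1}{2}]$-module and not an $\reals$-vector space, is that when rescaling $x$ to land inside a prescribed $\T{K}$-neighbourhood of $0$ one must scale by a suitable dyadic rational rather than by an arbitrary positive real (and the case $\max_j|\alpha_j(x)|=0$ is handled by noting that then $nx$ lies in the neighbourhood for all $n\in\naturals$).

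Consequently $L$ vanishes on the ideal $N:=\bigcap_{j=1}^m\ker\alpha_j$ and factors through the $\integers[\frac{1}{2}]$-linear injection $R/N\hookrightarrow\reals^m$, $x+N\mapsto(\alpha_1(x),\dots,\alpha_m(x))$; the induced functional on $R/N$ is bounded by the sup-norm of $\reals^m$, hence extends $\reals$-linearly (again via dyadic approximation) to all of $\reals^m$, so that $L(x)=\sum_{j=1}^m c_j\alpha_j(x)$ for some $c_j\in\reals$, the $\alpha_j$ being taken distinct after combining equal ones. Now I would check $c_j\ge0$: by the standing hypothesis that $a\mapsto\hat a$ is injective on $R$, $\hat R$ separates the distinct $\alpha_j$, and it contains the dyadic constants, so its image in $\reals^m$ under $\hat a\mapsto(\alpha_1(a),\dots,\alpha_m(a))$ is dense; choosing $a\in R$ with $\alpha_{j_0}(a)$ close to $1$ and $\alpha_j(a)$ close to $0$ for $j\ne j_0$ gives $0\le L(a^{2d})=\sum_j c_j\alpha_j(a)^{2d}$, which tends to $c_{j_0}$ as the approximation improves, so $c_{j_0}\ge0$.

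It then follows that $\mu:=\sum_{j=1}^m c_j\,\delta_{\alpha_j}$ is a finite positive Borel measure on $K$ and $L(a)=\sum_j c_j\alpha_j(a)=\int_K\hat a\,d\mu$ for every $a\in R$. Alternatively, to keep the argument parallel to Corollary~\ref{M4Cmpt}, one can pass to the $\reals$-subalgebra $A\subseteq C(K)$ generated by $\hat R=\Phi(R)$ (whose elements are the finite sums $\sum r_i\hat a_i$, $r_i\in\reals$, $a_i\in R$), extend $L$ to $\bar L$ on $A$ by $\bar L(\sum r_i\hat a_i):=\sum r_iL(a_i)$ — well-defined because if $\sum r_i\hat a_i=0$ on $K$ then, approximating the $r_i$ by dyadics $\tilde r_i$, the element $\sum\tilde r_ia_i\in R$ has $\widehat{\sum\tilde r_ia_i}(\alpha)=\sum(\tilde r_i-r_i)\alpha(a_i)$ and so lies in any given basic $\T{K}$-neighbourhood of $0$, forcing $\sum r_iL(a_i)=0$ by $\T{K}$-continuity of $L$ at $0$ — observe from the previous paragraph that $\bar L(f)=\sum_j c_jf(\alpha_j)\ge0$ for every $f\in A$ nonnegative on $K$, and apply Theorem~\ref{Haviland} with $X=K$ Hausdorff, the inclusion $A\hookrightarrow C(K)$, and $\hat p=\Phi(p)\in A$ (for which $\hat p\ge0$ on $K$ and $\hat p^{-1}([0,i])=K_i$ is compact), which yields the measure directly. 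For this $\T{K}$-statement the hypothesis on $p$, and the use of Theorem~\ref{SosRngNC} to first obtain $L(\pos{K})\subseteq\reals^+$ (via $L\ge0$ on $\ringsop{R}{2d}$ and $\T{K}$-density), are not strictly needed; they are what is required for the norm version, Corollary~\ref{M4Cmpt}, where a $\norm{K}{}$-continuous functional need not be atomic.

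I expect the only real friction to be the recurrent need to cross between the $\integers[\frac{1}{2}]$-module structure of $R$ and genuine $\reals$-linearity: it enters in the domination bound, in the $\reals$-linear extension of the bounded $\integers[\frac{1}{2}]$-linear functional on $R/N$, and in the density of the image of $\hat R$ in $\reals^m$, and in each case an otherwise routine real-vector-space argument has to be pushed through a dyadic approximation. Everything else — factoring through $N$, the interpolation-type density in $\reals^m$, and the verification that the atomic measure reproduces $L$ — is bookkeeping.
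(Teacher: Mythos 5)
Your argument is correct, but it is genuinely different from the paper's. The paper proves this corollary by running the scheme of Corollary \ref{M4Cmpt} in the $\T{K}$ setting: it uses Theorem \ref{SosRngNC} (density of $\ringsop{R}{2d}$ in $\pos{K}$ for $\T{K}$) to deduce $L(\pos{K})\subseteq\reals^{+}$, extends $\bar{L}$ to the $\reals$-subalgebra $A$ generated by $\hat{R}$, and then invokes the general Haviland theorem (Theorem \ref{Haviland}) with $\hat{p}$ and the compact sets $K_i$. You instead exploit the specific (very coarse) nature of $\T{K}$: a $\T{K}$-continuous functional is dominated by finitely many of the point seminorms $\rho_{\alpha_j}$, hence factors through $R/\bigcap_j\ker\alpha_j\hookrightarrow\reals^m$ and is a linear combination $\sum_j c_j\alpha_j$; positivity of each $c_j$ then follows from $L(a^{2d})\ge0$ together with the density of the image of $R$ in $\reals^m$ (a finite Stone--Weierstrass/interpolation argument), and the representing measure is the finitely atomic $\sum_j c_j\delta_{\alpha_j}$. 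The dyadic-versus-real linearity issues you flag (in the domination bound, in the $\reals$-linear extension, and in the density step) are real but are handled exactly as you indicate. Your route buys something the paper's does not make explicit: every $\T{K}$-continuous positive semidefinite functional is finitely atomic, so the Haviland-type hypothesis on $p$ and the appeal to Theorem \ref{SosRngNC} are indeed superfluous for this corollary; the paper's route, by contrast, is uniform with the $\norm{K}{}$ case (where functionals need not be atomic) and with the general closure-of-cones framework. One small correction: the separation of the distinct $\alpha_j$ by elements of $R$ does not follow from the standing injectivity of $a\mapsto\hat{a}$ (which separates elements of $R$ by points of $\V{R}$, not the reverse); it is simply the definition of $\alpha_i\neq\alpha_j$ as maps on $R$. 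This does not affect the validity of the argument.
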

\begin{proof}
Following the argument in the proof of Corollary \ref{M4Cmpt}, the map $\bar{L}:\hat{R}\rightarrow\reals$ is well-defined and has a $\mathcal{F}_K$-continuous
extension to the $\reals$-subalgebra $A$ of $C(K)$, generated by $\hat{R}$. Applying Theorem \ref{Haviland} to $\bar{L}$, $\hat{p}$ and $A$, the result follows.
\end{proof}

\section{Results for $\reals$-Algebras}\label{RAlg}
In this section we assume that $R$ is an $\reals$-algebra. First 
we consider closure of $\ringsop{R}{2d}$ with respect to any sub-multiplicative norm $\norm{}{}$ on $R$.
We prove that the closure of $\ringsop{R}{2d}$ with respect to the norm is equal to nonnegative elements over the global spectrum $\K{\norm{}{}}$ of 
$(R,\norm{}{})$.
Recall that the global spectrum of a topological $\reals$-algebra, also known as the Gelfand spectrum, is the set of all continuous elements of $\V{R}$. 

Furthermore, in the case of $\reals$-algebras, we generalize the conclusion of Theorem \ref{SosRngNC} to an arbitrary 
$\ringsop{R}{2d}$-module $M$.
\subsection*{Normed $\reals$-Algebras}

Suppose that $(R,\norm{}{})$ is a normed $\reals$-algebra, i.e., the norm satisfies the sub-multiplicativity condition $\norm{}{xy}\leq\norm{}{x}\norm{}{y}$ 
for all $x,y\in R$.
\begin{lemma}\label{CtsHom}
If $\alpha\in \K{\norm{}{}}$ then $|\alpha(x)|\leq\norm{}{x}$, for all $x\in R$.
\end{lemma}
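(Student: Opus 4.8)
The plan is to exploit the fact that $\alpha$ is a continuous $\reals$-algebra homomorphism $R \to \reals$, so its kernel is a closed maximal ideal, and to bootstrap continuity into the sharp bound $|\alpha(x)| \le \norm{}{x}$ via a standard spectral-radius argument using powers of $x$. First I would record that continuity of $\alpha$ gives a constant $M > 0$ with $|\alpha(y)| \le M\norm{}{y}$ for all $y \in R$; the point is that $M$ can be improved to $1$. For this, fix $x \in R$ and apply the inequality to $y = x^n$: since $\alpha$ is multiplicative, $|\alpha(x)|^n = |\alpha(x^n)| \le M\norm{}{x^n} \le M\norm{}{x}^n$, where the last step uses sub-multiplicativity of $\norm{}{}$. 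Taking $n$-th roots gives $|\alpha(x)| \le M^{1/n}\norm{}{x}$, and letting $n \to \infty$ yields $|\alpha(x)| \le \norm{}{x}$, as desired.

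The only genuine point to check is that a homomorphism in the Gelfand spectrum $\K{\norm{}{}}$, i.e. a \emph{continuous} element of $\V{R}$, is indeed continuous as a linear map in the sense needed to extract the constant $M$; but this is exactly the definition of membership in $\K{\norm{}{}}$ recalled just before the lemma (the global/Gelfand spectrum consists of the continuous $\reals$-homomorphisms $R \to \reals$), so continuity at $0$ with respect to the norm topology on $R$ and the usual topology on $\reals$ furnishes such an $M$ immediately. One should also note that $x^n$ makes sense and $\norm{}{x^n} \le \norm{}{x}^n$ follows by an obvious induction from sub-multiplicativity; these are the routine verifications I would not belabor.

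I do not anticipate a real obstacle here: the argument is the classical ``spectral radius $\le$ norm'' trick transported to the real-algebra setting, and every ingredient (multiplicativity of $\alpha$, sub-multiplicativity of $\norm{}{}$, continuity of $\alpha$) is available by hypothesis. The mild subtlety, if any, is purely bookkeeping — making sure the constant $M$ is uniform in $x$ (it is, being the operator-norm bound of the single fixed functional $\alpha$) while $n$ is allowed to depend on $x$, which is harmless since we take a limit in $n$ for each fixed $x$ separately.
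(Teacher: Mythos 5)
Your proposal is correct and uses essentially the same argument as the paper: both exploit multiplicativity of $\alpha$ and sub-multiplicativity of $\norm{}{}$ applied to powers $x^n$, together with boundedness of the continuous functional $\alpha$. The only difference is cosmetic — you take $n$-th roots and pass to the limit directly, while the paper runs the same estimate as a proof by contradiction, showing $|\alpha(x^n)|/\norm{}{x^n}$ would be unbounded.
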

\begin{proof}
In contrary suppose that $\exists x\in R$ such that $|\alpha(x)|>\norm{}{x}$. Then for $n\ge1$,
\[
    \norm{}{x^n}\leq\norm{}{x}^n\leq|\alpha(x)|^n=|\alpha(x^n)|.
\]
Therefore $\frac{|\alpha(x)|^n}{\norm{}{x}^n}\leq\frac{|\alpha(x^n)|}{\norm{}{x^n}}$. Since $\frac{|\alpha(x)|}{\norm{}{x}}>1$,
$\frac{|\alpha(x^n)|}{\norm{}{x^n}}\rightarrow\infty$ as $n\rightarrow\infty$. This contradicts the fact that $\alpha$ is $\norm{}{}$-continuous. So
\[
    \forall x\in R \quad |\alpha(x)|\leq\norm{}{x}.
\]
\end{proof}
\begin{lemma}\label{NormSo2d}
Let $d\ge1$ be an integer, $a\in R$ and $r>\norm{}{a}$. Then $(r\pm a)^{\frac{1}{2d}}\in\tilde{R}$, where $\tilde{R}$ is the completion of $(R,\norm{}{})$.
\end{lemma}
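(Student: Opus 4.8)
Let $d\ge 1$, $a\in R$, $r>\|a\|$. Then $(r\pm a)^{1/2d}\in\tilde R$, the completion of $(R,\|\cdot\|)$.

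The plan is to construct $(r\pm a)^{1/(2d)}$ as a binomial series inside the completed algebra $\tilde{R}$. Write $r\pm a = r\bigl(1 \pm r^{-1}a\bigr)$ and put $y := \pm r^{-1}a \in R$, so that $\norm{}{y} = \norm{}{a}/r < 1$. First I would record that sub-multiplicativity makes the multiplication of $R$ uniformly continuous on bounded sets (via $\norm{}{uv-u'v'}\le\norm{}{u}\norm{}{v-v'}+\norm{}{u-u'}\norm{}{v'}$), hence it extends to $\tilde{R}$ and makes $\tilde{R}$ a complete unital normed algebra in which $\norm{}{u^m}\le\norm{}{u}^m$ for all $u\in\tilde{R}$ and $m\ge 1$.

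Next I would recall that the Taylor series $g(T) := \sum_{k\ge 0}\binom{1/(2d)}{k}T^k$ of the real-analytic function $t\mapsto(1+t)^{1/(2d)}$ has radius of convergence $1$, so $\sum_{k\ge 0}\bigl|\binom{1/(2d)}{k}\bigr|\,\norm{}{y}^k < \infty$. Consequently the series $s := \sum_{k\ge 0}\binom{1/(2d)}{k}y^k$ is absolutely convergent in the Banach space $\tilde{R}$ and defines an element $s\in\tilde{R}$. If one shows $s^{2d} = 1+y$, then $r\pm a = r\,s^{2d} = \bigl(r^{1/(2d)}s\bigr)^{2d}$ (the scalar $r^{1/(2d)}\in\reals$ commutes with everything and $r>\norm{}{a}\ge 0$), so $(r\pm a)^{1/(2d)} = r^{1/(2d)}s\in\tilde{R}$, which is the assertion.

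It remains to verify $s^{2d}=1+y$, and for this I would combine two facts. In $\reals[[T]]$ one has $g(T)^{2d} = 1+T$: comparing Taylor coefficients in the identity $\bigl((1+t)^{1/(2d)}\bigr)^{2d} = 1+t$ on $(-1,1)$ shows that the $2d$-fold Cauchy product of $g$ with itself equals $1+T$, and Cauchy multiplication of power series is exactly formal multiplication in $\reals[[T]]$. Secondly, evaluation at $y$ is multiplicative on absolutely convergent series: if $\sum_k|a_k|\norm{}{y}^k<\infty$ and $\sum_k|b_k|\norm{}{y}^k<\infty$, then $\sum_n|c_n|\norm{}{y}^n\le\bigl(\sum_k|a_k|\norm{}{y}^k\bigr)\bigl(\sum_k|b_k|\norm{}{y}^k\bigr)<\infty$ for $c_n := \sum_{j=0}^n a_jb_{n-j}$, and $\bigl(\sum_k a_ky^k\bigr)\bigl(\sum_k b_ky^k\bigr) = \sum_n c_ny^n$ in $\tilde{R}$; this is the Cauchy product theorem, whose scalar proof (reordering an absolutely convergent double series) carries over verbatim, using that powers of the single element $y$ commute. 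Iterating the second fact and invoking the first yields $s^{2d} = \sum_n (g^{2d})_n\,y^n = 1+y$, where $(g^{2d})_n$ denotes the $n$-th coefficient of $g^{2d}=1+T$.

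The only routine ingredients are the extension of the algebra structure to $\tilde{R}$ and the transfer of the Cauchy product theorem to a Banach algebra; neither is deep. The single point where care is required is that $\norm{}{y}<1$ \emph{strictly}, which keeps us strictly inside the disk of convergence of $g$ — and this is exactly what the hypothesis $r>\norm{}{a}$ provides. I foresee no serious obstacle.
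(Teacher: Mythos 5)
Your proof is correct and takes essentially the same route as the paper: both expand $(r\pm a)^{1/(2d)}$ as a power series whose radius of convergence strictly exceeds $\norm{}{a}$ and use sub-multiplicativity ($\norm{}{a^i}\leq\norm{}{a}^i$) to get absolute convergence in the completion $\tilde{R}$. The differences are minor --- you normalize by $r$ so as to work with the standard binomial series on the unit disk, and you additionally verify via the Cauchy product that the limit really is a $2d$-th root of $r\pm a$, a step the paper's proof leaves implicit.
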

\begin{proof}
Let $\sum_{i=0}^{\infty}\lambda_it^i$ be the power series expansion on $(r\pm t)^{\frac{1}{2d}}$ about $t=0$. The series has the radius of convergence $r$.
Therefore, it converges for every $t$ with $|t|<r$. Since $r>\norm{}{a}$, $\sum_{i=0}^{\infty}\lambda_i\norm{}{a}^i<\infty$.
Note that $\norm{}{a^i}\leq\norm{}{a}^i$ for each $i=0,1,2,\cdots$, assuming $a^0=1$, so
\[
\begin{array}{lcl}
    \norm{}{\sum_{i=0}^{\infty}\lambda_ia^i} & \leq & \sum_{i=0}^{\infty}\lambda_i\norm{}{a^i}\\
     & \leq & \sum_{i=0}^{\infty}\lambda_i\norm{}{a}^i\\
     & < & \infty.
\end{array}
\]
This implies that $(1\pm a)^{\frac{1}{2d}}=\sum_{i=0}^{\infty}\lambda_ia^i\in\tilde{R}$.
\end{proof}
Let $A_{\norm{}{}}:=\{\norm{}{x}\pm x:x\in R\}$ and $M_{2d}$ be the $\ringsop{R}{2d}$-module generated by $A_{\norm{}{}}$. Clearly, $M_{2d}$ is archimedean and
hence $\K{M_{2d}}$ is compact. Note that
\[
\begin{array}{lcl}
    \alpha\in\K{M_{2d}} & \Leftrightarrow & \alpha(a)\ge0\quad\forall a\in M_{2d}\\
     & \Leftrightarrow & \norm{}{x}\pm\alpha(x)\ge0\quad\forall x\in R\\
     & \Leftrightarrow & |\alpha(x)|\leq\norm{}{x}\quad\forall x\in R\\
     & \Leftrightarrow & \alpha\textrm{ is }\norm{}{}\textrm{-continuous}.
\end{array}
\]
Therefore $\K{M_{2d}}$ is nothing but global spectrum of $(R,\norm{}{})$.

Theorem \ref{ClsrNrmdAlg} is the analogue of \cite[Theorem 4.3]{GMW} for normed algebras. Note that the fact that the Gelfand spectrum $\K{\norm{}{}}$ is 
compact is well-known (under additional assumptions)(see \cite[Theorem 2.2.3]{EKan}). However our proof is algebraic and based on the following result of T. Jacobi.

\noindent\textbf{Theorem.} Suppose $M \subseteq R$ is an archimedean $\sum R^{2d}$-module of $R$ for some integer $d\ge 1$. Then, $\forall$ $a\in R$, $$\hat{a}>0 \text{ on } \mathcal{K}_M \ \Rightarrow \ a\in M.$$

\begin{proof} See \cite[Theorem 4]{J} for every $d\ge1$ or \cite[Theorem 1.4]{MPut} for $d=1$.
\end{proof}

\begin{thm}\label{ClsrNrmdAlg}
Let $(R,\norm{}{})$ be a normed $\reals$-algebra and $d\ge1$ an integer. Then $\K{\norm{}{}}$ is compact and 
$\cl{\norm{}{}}{\ringsop{R}{2d}}=\pos{\K{\norm{}{}}}$.
\end{thm}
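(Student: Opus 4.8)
The plan is to mimic the structure of Theorems~\ref{SosRng} and \ref{SosRngNC}: one inclusion is soft, and for the other we exploit the archimedean module $M_{2d}$ together with Jacobi's theorem. Since $\K{\norm{}{}}=\K{M_{2d}}$ is the non-negativity set of an archimedean module, it is compact by the standard argument (a continuous functional-like homomorphism $\alpha$ is bounded in absolute value on each generator, and archimedeanity forces $\V{R}$-closedness plus boundedness of $\hat a$ for every $a$, so $\K{M_{2d}}$ is a closed subset of a product of compact intervals). This gives the first assertion. For the easy inclusion $\cl{\norm{}{}}{\ringsop{R}{2d}}\subseteq\pos{\K{\norm{}{}}}$: each element of $\ringsop{R}{2d}$ lies in $\pos{\K{\norm{}{}}}$ since $\alpha(h^{2d})=\alpha(h)^{2d}\ge0$, and $\pos{\K{\norm{}{}}}$ is $\norm{}{}$-closed because for each $\alpha\in\K{\norm{}{}}$ the map $a\mapsto\alpha(a)$ is $\norm{}{}$-continuous by Lemma~\ref{CtsHom}, so $\pos{\K{\norm{}{}}}$ is an intersection of closed half-spaces.

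For the reverse inclusion, let $a\in\pos{\K{\norm{}{}}}$, i.e.\ $\hat a\ge0$ on $\K{M_{2d}}$, and let $\epsilon>0$. The idea is to first reduce to the strictly positive case and then apply Jacobi. Pick $k\in\naturals$ large enough that $\tfrac1{2^k}<\epsilon$; then $a+\tfrac1{2^k}$ satisfies $\widehat{a+\tfrac1{2^k}}>0$ on $\K{M_{2d}}$, so by Jacobi's theorem $a+\tfrac1{2^k}\in M_{2d}$. Now I must pass from membership in $M_{2d}$ to approximation by an element of $\ringsop{R}{2d}$. An element of $M_{2d}$ has the form $\sigma_0+\sum_{j=1}^m\sigma_j(\norm{}{x_j}\pm x_j)$ with $\sigma_j\in\ringsop{R}{2d}$ and $x_j\in R$. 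Here is where Lemma~\ref{NormSo2d} enters: for $r>\norm{}{x_j}$ the element $(r\pm x_j)^{1/2d}$ lies in the completion $\tilde R$, hence is a $\norm{}{}$-limit of elements of $R$; raising to the $2d$-th power (which is $\norm{}{}$-continuous on bounded sets) shows $r\pm x_j$ is a $\norm{}{}$-limit of $2d$-th powers from $R$, and multiplying by the $2d$-powers constituting $\sigma_j$ keeps us in $\ringsop{R}{2d}$ up to arbitrarily small $\norm{}{}$-error. Since $\norm{}{x_j}\pm x_j = \lim_{r\downarrow\norm{}{x_j}}(r\pm x_j)$ is itself handled by taking $r$ close to $\norm{}{x_j}$, we conclude every element of $M_{2d}$ is in $\cl{\norm{}{}}{\ringsop{R}{2d}}$; in particular $a+\tfrac1{2^k}\in\cl{\norm{}{}}{\ringsop{R}{2d}}$, so there is $s\in\ringsop{R}{2d}$ with $\norm{}{(a+\tfrac1{2^k})-s}<\epsilon$, whence $\norm{}{a-s}<2\epsilon$. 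As $\epsilon$ was arbitrary, $a\in\cl{\norm{}{}}{\ringsop{R}{2d}}$.

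The main obstacle I anticipate is the bookkeeping in the step "$M_{2d}\subseteq\cl{\norm{}{}}{\ringsop{R}{2d}}$": one must show that $\ringsop{R}{2d}$ is closed under $\norm{}{}$-limits of the specific products that arise, and in particular that multiplying a convergent sequence of $2d$-powers by a fixed $2d$-power, and summing finitely many such, stays inside $\cl{\norm{}{}}{\ringsop{R}{2d}}$ — this is true because $\ringsop{R}{2d}\cdot\ringsop{R}{2d}\subseteq\ringsop{R}{2d}$ and sums of elements of $\cl{\norm{}{}}{\ringsop{R}{2d}}$ again lie in it, but it requires care that $r\pm x_j$ itself (not just its $1/2d$-th root) is being approximated by honest $2d$-powers, which follows by continuity of $t\mapsto t^{2d}$ and the submultiplicativity bound $\norm{}{y^{2d}-z^{2d}}\le\norm{}{y-z}\sum_{i=0}^{2d-1}\norm{}{y}^i\norm{}{z}^{2d-1-i}$. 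A clean alternative, which I would probably adopt to streamline the exposition, is to phrase the approximation directly: for $a+\tfrac1{2^k}\in M_{2d}$ archimedean and strictly positive, repeat the "polynomial in a bounded element" trick from the proof of Theorem~\ref{SosRngNC} — write $\widehat{a+\tfrac1{2^k}}$ as a $2d$-th power of a continuous function on the compact set $\K{M_{2d}}$, approximate that function by a polynomial with dyadic coefficients in the images $\hat x$, and pull back — but since here we have a genuine submultiplicative norm rather than just the seminorms $\rho_\alpha$, the cleanest route is the completion argument via Lemma~\ref{NormSo2d} as above.
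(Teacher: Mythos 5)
Your proposal is correct and follows essentially the same route as the paper: the easy inclusion via $\norm{}{}$-closedness of $\pos{\K{\norm{}{}}}$, then for the reverse inclusion a small positive perturbation of $a$, Jacobi's theorem applied to the archimedean module $M_{2d}$ to write $a+\epsilon'$ as $\sigma_0+\sum\sigma_i(\norm{}{x_i}\pm x_i)$, and Lemma~\ref{NormSo2d} (with the radius nudged slightly above $\norm{}{x_i}$) plus submultiplicativity to replace each generator by a nearby $2d$-th power from $R$. The paper's proof is exactly this, with the error bookkeeping made explicit by choosing $\delta$ with $(\sum_i\norm{}{\sigma_i})\delta\le\epsilon/2$ and approximating $\tfrac{\delta}{2}+s_i$ to within $\tfrac{\delta}{2}$.
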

\begin{proof}
Since each $\alpha\in\K{\norm{}{}}$ is continuous and $\pos{\K{\norm{}{}}}=\bigcap_{\alpha\in\K{\norm{}{}}}\alpha^{-1}([0,\infty))$, $\pos{\K{\norm{}{}}}$ is
$\norm{}{}$-closed. Clearly $\ringsop{R}{2d}\subseteq\pos{\K{\norm{}{}}}$, therefore $\cl{\norm{}{}}{\ringsop{R}{2d}}\subseteq\pos{\K{\norm{}{}}}$.

For the reverse inclusion we have to show that if $a\in\pos{\K{\norm{}{}}}$ and $\epsilon>0$ are given, then $\exists b\in\ringsop{R}{2d}$ with
$\norm{}{a-b}\leq\epsilon$. Note that $\hat{a}+\frac{\epsilon}{2}$ is strictly positive on $\K{\norm{}{}}$. Since $\K{\norm{}{}}=\K{M_{2d}}$ and $M_{2d}$ is
archimedean, $\K{M_{2d}}$ is compact. 
By Jacobi's Theorem, $a+\frac{\epsilon}{2}\in M_{2d}$. So $a+\frac{\epsilon}{2}=\sum_{i=0}^k\sigma_is_i$, where $\sigma_i\in\ringsop{R}{2d}$,
$i=0,\dots,k$, $s_0=1$ and $s_i\in A_{\norm{}{}}$, $i=1,\dots,k$. Choose $\delta>0$ satisfying $(\sum_{i=0}^k\norm{}{\sigma_i})\delta\leq\frac{\epsilon}{2}$.
By Lemma \ref{NormSo2d} and continuity of the function $x\mapsto x^{2d}$ on $\tilde{R}$, there exists $r_i\in R$ such that
$\norm{}{\frac{\delta}{2}+s_i-r_i^{2d}}\leq\frac{\delta}{2}$, i.e., $\norm{}{s_i-r_i^{2d}}\leq\delta$, $i=1,\dots,k$.
Take $b=\sigma_0+\sum_{i=1}^k\sigma_ir_i^{2d}\in\ringsop{R}{2d}$. Then
\[
\begin{array}{lcl}
    \norm{}{a-b} & = & \displaystyle \norm{}{\sum_{i=1}^k\sigma_is_i-\sum_{i=1}^k\sigma_ir_i^{2d}-\frac{\epsilon}{2}}\\
     & \leq & \displaystyle \sum_{i=1}^k\norm{}{\sigma_i}\cdot\norm{}{s_i-r_i^{2d}}+\frac{\epsilon}{2}\\
     & \leq & \epsilon.
\end{array}
\]
This completes the proof.
\end{proof}
\begin{crl}\label{M4NAlge}
Let $(R,\norm{}{})$ be a normed $\reals$-algebra, $d\ge1$ an integer and $L:R\longrightarrow\reals$ a $\norm{}{}$-continuous linear functional. 
If for each $a\in R$, $L(r^{2d})\ge0$, then there exists a Borel measure $\mu$ on $\K{\norm{}{}}$ such that
\[
	L(a)=\int_{\K{\norm{}{}}}\hat{a}~d\mu\quad\forall a\in R.
\]
\end{crl}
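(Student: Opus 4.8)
The plan is to deduce Corollary \ref{M4NAlge} from Theorem \ref{ClsrNrmdAlg} in essentially the same way Corollary \ref{M4Cmpt} was deduced from Theorem \ref{SosRng}. First I would observe that the hypothesis $L(r^{2d})\ge 0$ for all $r\in R$, together with $\norm{}{}$-continuity of $L$, forces $L$ to be nonnegative on the closed cone $\cl{\norm{}{}}{\ringsop{R}{2d}}$: indeed if $c\in\cl{\norm{}{}}{\ringsop{R}{2d}}$ then $c$ is a $\norm{}{}$-limit of elements of $\ringsop{R}{2d}$, each of which maps to $\reals^+$ under $L$, and continuity gives $L(c)\ge 0$. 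By Theorem \ref{ClsrNrmdAlg} this closure is exactly $\pos{\K{\norm{}{}}}$, so $L\big(\pos{\K{\norm{}{}}}\big)\subseteq\reals^+$.

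Next I would pass from $R$ to the algebra of continuous functions on the spectrum. Let $\hat R=\{\hat a:a\in R\}\subseteq C(\K{\norm{}{}})$ and define $\bar L(\hat a):=L(a)$. As in the proof of Corollary \ref{M4Cmpt}, $\bar L$ is well defined: if $\hat a=0$ then $\hat a\ge0$ and $-\hat a\ge0$ on $\K{\norm{}{}}$, hence $L(a)\ge0$ and $L(-a)\ge 0$, so $L(a)=0$; equivalently $\ker\Phi\subseteq\ker L$. (Here $\K{\norm{}{}}$ is compact by Theorem \ref{ClsrNrmdAlg}, and by Lemma \ref{CtsHom} every $\alpha\in\K{\norm{}{}}$ satisfies $|\alpha(a)|\le\norm{}{a}$, so $\norm{\K{\norm{}{}}}{\hat a}\le\norm{}{a}$; thus $\Phi:R\to C(\K{\norm{}{}})$ is continuous when the target carries the sup-norm, and $\bar L$ is $\norm{\K{\norm{}{}}}{}$-continuous on $\hat R$ because $|\bar L(\hat a)|=|L(a)|\le\norm{}{L}\,\norm{}{a}$ — wait, this last bound is in the $R$-norm, not the sup-norm, so continuity of $\bar L$ is exactly the point that needs care.)

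The main obstacle is therefore precisely this: $\bar L$ is a priori only continuous for the norm inherited from $R$ via $\Phi$, which may be strictly finer than the sup-norm on $\hat R$, so one cannot immediately Hahn–Banach-extend $\bar L$ to $C(\K{\norm{}{}})$. The resolution is that on $\pos{\K{\norm{}{}}}$ we have the a priori estimate coming from Theorem \ref{ClsrNrmdAlg}: for $\hat a\ge0$ with $\norm{\K{\norm{}{}}}{\hat a}\le t$, the function $t-\hat a$ is also nonnegative, and by the theorem $a$ and $t-a$ both lie in $\cl{\norm{}{}}{\ringsop{R}{2d}}$, whence $0\le L(a)\le L(t\cdot 1)=t\,L(1)$; writing a general $\hat a$ as a difference of its nonnegative and nonpositive parts bounded by $\norm{\K{\norm{}{}}}{\hat a}$ (using that $\pm\,\norm{\K{\norm{}{}}}{\hat a}-\hat a$ are nonnegative), this yields $|\bar L(\hat a)|\le 2\,L(1)\,\norm{\K{\norm{}{}}}{\hat a}$. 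Hence $\bar L$ is genuinely sup-norm continuous on $\hat R$, and also on the $\reals$-subalgebra $A$ of $C(\K{\norm{}{}})$ it generates (extending $\reals$-linearly as in Corollary \ref{M4Cmpt}).

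Finally I would invoke density and representation. By Lemma \ref{denseness} (with $K=\K{\norm{}{}}$, which is compact), $\Phi(R)=\hat R$, and hence $A$, is dense in $(C(\K{\norm{}{}}),\norm{\K{\norm{}{}}}{})$; Hahn–Banach extends the bounded functional $\bar L$ to all of $C(\K{\norm{}{}})$, still denoted $\bar L$. An easy verification — identical to that at the end of Corollary \ref{M4Cmpt} — shows $\bar L(C^+(\K{\norm{}{}}))\subseteq\reals^+$: any nonnegative $g\in C(\K{\norm{}{}})$ is a sup-norm limit of $\hat b_n$, and one checks the $\hat b_n$ can be taken with $\bar L(\hat b_n)\ge -\epsilon_n\to 0$ using that $g+\epsilon$ lies in $\cl{\norm{\K{\norm{}{}}}{}}{\widehat{\ringsop{R}{2d}}}$. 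By the Riesz Representation Theorem there is a positive Borel measure $\mu$ on $\K{\norm{}{}}$ with $\bar L(f)=\int f\,d\mu$ for all $f\in C(\K{\norm{}{}})$; restricting to $f=\hat a$ gives $L(a)=\int_{\K{\norm{}{}}}\hat a\,d\mu$ for all $a\in R$, as desired.
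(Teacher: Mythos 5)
Your proposal is correct, but it takes a longer route than the paper. Both arguments begin the same way: continuity of $L$ plus $L(r^{2d})\ge 0$ gives $L\ge 0$ on $\cl{\norm{}{}}{\ringsop{R}{2d}}$, which equals $\pos{\K{\norm{}{}}}$ by Theorem \ref{ClsrNrmdAlg}. At that point the paper simply invokes the general Haviland-type result (Theorem \ref{Haviland}) with $X=\K{\norm{}{}}$ and $p=1$ (so that every $X_i=X$ is compact), and is done in one line. You instead re-derive the compact case of that theorem by hand: well-definedness of $\bar L$ on $\hat R$, a sup-norm bound for $\bar L$, density of $\hat R$ in $C(\K{\norm{}{}})$ via Lemma \ref{denseness}, Hahn--Banach, positivity on $C^+(\K{\norm{}{}})$, and Riesz representation. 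This is more work, but your treatment of the one genuinely delicate point is a real contribution: since $\norm{}{}$ is an arbitrary sub-multiplicative norm on $R$, it may be strictly finer than the pullback of the sup-norm (Lemma \ref{CtsHom} only gives $\norm{\K{\norm{}{}}}{\hat a}\le\norm{}{a}$), so $\norm{}{}$-continuity of $L$ does not directly give sup-norm continuity of $\bar L$; your estimate $|\bar L(\hat a)|\le L(1)\,\norm{\K{\norm{}{}}}{\hat a}$, obtained from $\norm{\K{\norm{}{}}}{\hat a}\cdot 1\pm a\in\pos{\K{\norm{}{}}}$ and positivity of $L$ on $\pos{\K{\norm{}{}}}$, closes this gap correctly (the factor $2$ you state is harmless but unnecessary). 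In short: your argument is a self-contained proof that buys transparency, while the paper's is a two-line reduction to a quoted theorem; both are valid.
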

\begin{proof}
Since $\K{\norm{}{}}$ is compact by Theorem \ref{ClsrNrmdAlg}, the conclusion follows by applying Theorem \ref{Haviland} for $X=\K{\norm{}{}}$ and $p=1$.
\end{proof}
\subsection*{Closures of $\sum R^{2d}$-modules in $\T{\V{R}}$}

\begin{thm}\label{ClMdl}
Let $M\subseteq R$ be a $\ringsop{R}{2d}$-module of $\reals$-algebra $R$ and $d\ge1$ an integer. Then $\cl{\T{\V{R}}}{M}=\pos{\K{M}}$.
\end{thm}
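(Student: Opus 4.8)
The plan is to mimic the proof of Theorem \ref{SosRngNC}, replacing the cone $\ringsop{R}{2d}$ by the module $M$ and using Jacobi's Theorem (quoted above) as the key algebraic input in place of the density argument.

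First I would dispose of the easy inclusion: since $M\subseteq\pos{\K{M}}$ by definition of $\K{M}$, and $\pos{\K{M}}=\bigcap_{\alpha\in\K{M}}\hat{a}^{-1}([0,\infty))$ is an intersection of $\T{\V{R}}$-closed sets (each evaluation $e_\alpha$ is $\T{\V{R}}$-continuous, as in Proposition \ref{Closeness}), $\pos{\K{M}}$ is $\T{\V{R}}$-closed and contains $M$, so $\cl{\T{\V{R}}}{M}\subseteq\pos{\K{M}}$.

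For the reverse inclusion, let $a\in\pos{\K{M}}$ and let $U_{\alpha_1,\dots,\alpha_n}^{\epsilon}(a)$ be a basic $\T{\V{R}}$-neighborhood of $a$; I must find an element of $M$ in it. The obstacle is that $M$ need not be archimedean, so Jacobi's Theorem does not apply to $M$ directly. To get around this, fix the finitely many homomorphisms $\alpha_1,\dots,\alpha_n$ and pass to a suitable archimedean enlargement: choose $m\in\naturals$ with $\alpha_i(a)<2^m$ for all $i$, and consider the $\ringsop{R}{2d}$-module $M' := M + \sum_{j\in\naturals}\ringsop{R}{2d}(2^j - a)+\ringsop{R}{2d}(2^j+a)$ — or more simply, add to $M$ enough elements $N\pm b$ (for $b$ ranging over a set of generators, or over all of $R$) to make the result archimedean, say $M'' := M + M_{\reals^n\text{-bdd}}$; here one wants $\K{M''}$ to still contain all $\alpha_i$. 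The cleanest route is: since $\hat a + \delta > 0$ on $\K M$ for $\delta>0$, and one only needs positivity near $\alpha_1,\dots,\alpha_n$, localize — but the genuinely correct device (matching the paper's style) is: first prove the Claim that if $\hat a > 0$ on $\K M$ then $a\in\cl{\T{\V R}}{M}$, then handle general $a\in\pos{\K M}$ by the perturbation $a+\tfrac1{2^k}$ exactly as in Theorem \ref{SosRngNC}, using $\rho_\alpha(a+\tfrac1{2^k})\to\rho_\alpha(a)$.

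To prove the Claim, given $\hat a>0$ on $\K M$ and a neighborhood $U_{\alpha_1,\dots,\alpha_n}^{\epsilon}(a)$, I would build an archimedean $\ringsop{R}{2d}$-module $M^\ast\supseteq M$ whose nonnegativity set still meets the requirement $\alpha_i\in\K{M^\ast}$; concretely, let $M^\ast$ be generated by $M$ together with $\{c_b\pm b : b\in R\}$ where $c_b\in\integers[\tfrac12]$ is chosen with $c_b>\max_i|\alpha_i(b)|$ — then each $\alpha_i$ lies in $\K{M^\ast}$, so $\K{M^\ast}\neq\emptyset$ and $M^\ast$ is archimedean, hence $\K{M^\ast}$ is compact and (crucially) $\hat a>0$ on $\K{M^\ast}\subseteq$ (closure of) $\{\alpha:\alpha(a)>0\}$... one must check $\hat a>0$ on $\K{M^\ast}$, which holds because $\K{M^\ast}\subseteq\K M$ and $\hat a>0$ on $\K M$. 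Now Jacobi's Theorem gives $a\in M^\ast$, so $a=\sigma_0+\sum\sigma_i s_i$ with $\sigma_i\in\ringsop{R}{2d}$ and $s_i$ among the generators $c_b\pm b$ or in $M$; applying each $\alpha_i$ and using $\alpha_i(c_b\pm b)\ge 0$ plus $\alpha_i(M)\ge 0$, together with the fact that the extra generators were only needed to invoke archimedeanness while their $\alpha_i$-values are controlled, one extracts an element of $M$ (not just $M^\ast$) within $\epsilon$ in the $\rho_{\alpha_i}$ seminorms — this last bookkeeping step, separating the genuine $M$-part from the auxiliary archimedean padding, is the main technical obstacle and must be done carefully, perhaps by instead applying Jacobi to $M$ after first replacing $a$ by $a$ restricted/evaluated appropriately, or by choosing the padding inside the ideal on which all $\alpha_i$ vanish. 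I expect the write-up to handle this exactly as in the $\ringsop{R}{2d}$ case of Theorem \ref{SosRngNC}: reduce to $\hat a>0$, scale into the unit range at the points $\alpha_i$, and approximate $\sqrt[2d]{\alpha_i(b)}$ by a polynomial with $\integers[\tfrac12]$-coefficients, so that the module structure of $M$ is used only through $1\in M$ and $\ringsop{R}{2d}\cdot M\subseteq M$, with Jacobi's Theorem invoked on an archimedean module built from $M$ by adding generators lying in $\bigcap_i\ker\alpha_i$.
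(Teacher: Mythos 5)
Your easy inclusion is fine, but the reverse inclusion has a gap that your own write-up flags without closing, and in fact the strategy breaks one step earlier than you notice. The closure is taken in $\T{\V{R}}$, so a basic neighborhood $U_{\alpha_1,\dots,\alpha_n}^{\epsilon}(a)$ involves arbitrary $\alpha_1,\dots,\alpha_n\in\V{R}$, not elements of $\K{M}$. Since any $\ringsop{R}{2d}$-module $M^{\ast}\supseteq M$ satisfies $\K{M^{\ast}}\subseteq\K{M}$, a homomorphism $\alpha_i\notin\K{M}$ can never be forced into $\K{M^{\ast}}$; your assertion that ``each $\alpha_i$ lies in $\K{M^{\ast}}$'' therefore fails exactly in the essential case, namely when some $\alpha_i(a)<0$ (which can happen for $a\in\pos{\K{M}}$ since $a$ is only controlled on $\K{M}$). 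Moreover, even where Jacobi's theorem applies and yields $a+\delta\in M^{\ast}$, you only get membership in $M^{\ast}$, and the ``bookkeeping step'' of extracting a nearby element of $M$ is left entirely open; your suggested fix of putting the archimedean padding inside $\bigcap_i\ker\alpha_i$ cannot work, because archimedeanness forces generators of the form $N\pm b$ for all $b\in R$, and these do not vanish at the $\alpha_i$.

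The paper's proof is entirely different and much more elementary: no Jacobi, no archimedean enlargement, no perturbation $a+\frac{1}{2^k}$. Given $a\in\pos{\K{M}}$ and $\alpha_1,\dots,\alpha_n\in\V{R}$, one first produces for each $i$ an element $t_i\in M$ with $\alpha_i(t_i)=\alpha_i(a)$: if $\alpha_i(a)\ge0$ take $t_i=\alpha_i(a)\cdot 1_R$, and if $\alpha_i(a)<0$ then $\alpha_i\notin\K{M}$, so some $s_i\in M$ has $\alpha_i(s_i)<0$ and $t_i=\frac{\alpha_i(a)}{\alpha_i(s_i)}s_i\in M$ since the scalar is positive. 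These are then patched together by a Lagrange-type interpolation: one builds products $p_i$ whose values under the $\alpha_k$ are $0$ or $1$ according to whether $\alpha_k$ separates the $t_l$ from $\alpha_i$, and the element $p=\sum_{j=1}^{n}\frac{1}{\lambda_j}p_j^{2d}t_j$ lies in $M$ and satisfies $\alpha_i(p)=\alpha_i(a)$ exactly for all $i$, hence $p\in U_{\alpha_1,\dots,\alpha_n}^{\epsilon}(a)\cap M$. You should rebuild your argument along these lines; the only structure needed is $1\in M$, $M+M\subseteq M$ and $\ringsop{R}{2d}\cdot M\subseteq M$ (which gives $\reals^{+}M\subseteq M$ because $R$ is an $\reals$-algebra).
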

\begin{proof}
Since for each $m\in M$, $\hat{m}\ge0$ on $\K{M}$, we have $M\subseteq\pos{\K{M}}$. Each homomorphism $\alpha\in\V{R}$ is continuous, so
\[
    \pos{\K{M}}=\bigcap_{\alpha\in\K{M}}\alpha^{-1}([0,\infty)),
\]
is closed in $\T{\V{R}}$. Therefore $\cl{\T{\V{R}}}{M}\subseteq\pos{\K{M}}$. For the reverse inclusion, we show that for each $a\in\pos{\K{M}}$ and any open set
$U$ containing $a$, $U\cap M\neq\emptyset$. Hence $a\in\cl{\T{\V{R}}}{M}$.

For $a\in\pos{\K{M}}$ and an open set $U$ containing $a$, there exist $\alpha_1,\dots,\alpha_n\in\V{R}$ and $\epsilon>0$ such that
$U_{\alpha_1,\dots,\alpha_n}^{\epsilon}(a)\subseteq U$, where
\[
    U_{\alpha_1,\dots,\alpha_n}^{\epsilon}(a)=\{b\in R~:~\rho_{\alpha_i}(a-b)<\epsilon, i=1,\dots,n\}
\]
is a typical basic open set in $\T{\V{R}}$.\\
\textit{Claim}. For each $i=1,\dots,n$ there exists $t_i\in M$, $\alpha_i(a)=\alpha_i(t_i)$.

For each $i=1,\dots,n$, either $\alpha_i(a)\ge0$, or $\alpha_i(a)<0$.
If $\alpha_i(a)\ge0$, take $t_i=\alpha_i(a)\cdot 1_R$, which belonges to $M$.
Suppose that $\alpha_i(a)<0$. There exists $s_i\in M$ such that $\alpha_i(s_i)<0$. Otherwise, $\alpha_i\in\K{M}$ and hence $\alpha_i(a)\ge0$ which is a
contradiction. So $\frac{\alpha_i(a)}{\alpha_i(s_i)}>0$ and hence $t_i=\frac{\alpha_i(a)}{\alpha_i(s_i)}s_i\in M$. This completes the proof of the Claim.

For each $1\leq i,l\leq n$ set
\[
    p_{il}:=\prod_{\alpha_i(t_l)\neq\alpha_j(t_l)}\frac{t_l-\alpha_j(t_l)\cdot1}{\alpha_i(t_l)-\alpha_j(t_l)},
\]
if $\alpha_i(t_l)\neq\alpha_j(t_l)$ for some $1\leq j\leq n$, and $p_{il}=1$, if there is no such $j$. Then take $p_i=\prod_{l=1}^np_{il}$. Note that for each
$1\leq k\leq n$,
\[
\alpha_k(p_i)=\left\lbrace
\begin{array}{lr}
    0 & \textrm{if } \exists l\in\{1,\dots,n\}\quad\alpha_i(t_l)\neq\alpha_k(t_l),\\
    1 & \textrm{Otherwise}.
\end{array}
\right.
\]
Let $\lambda_i$ be the number of elements $k\in\{1,\dots,n\}$ such that $\alpha_i(t_l)=\alpha_k(t_l)$, for all $1\leq l\leq n$.
Take $p=\sum_{j=1}^{n}\frac{1}{\lambda_j}p_j^{2d}t_j$ which belonges to $M$. We have $\alpha_i(p)=\alpha_i(a)$ for $i=1,\dots,n$ and hence
$p\in U_{\alpha_1,\dots,\alpha_n}^{\epsilon}(a)$. Therefore $M\cap U_{\alpha_1,\dots,\alpha_n}^{\epsilon}(a)\neq\emptyset$, so $a\in \cl{\T{\V{R}}}{M}$, which
proves the reverse inclusion and hence $\cl{\T{\V{R}}}{M}=\pos{\K{M}}$.
\end{proof}
%
\section{Application to $R:=\rx$}\label{App2RX}
We are mainly interested in the special case of real polynomials. In
this case, $\rx$ is a free finitely generated commutative
$\reals$-algebra and hence every $\alpha\in\V{\rx}$ is completely
determined by $\alpha(X_i)$, $i=1,\dots,n$. So, $\V{\rx}=\reals^n$
with the usual euclidean topology.
\begin{crl}\label{SosPoly}
Let $K$ be a closed Zariski dense subset of $\reals^n$,
\begin{enumerate}
    \item{The family of multiplicative seminorms $\mathcal{F}_K$ induces a lmc Hausdorff topology $\T{K}$ on $\rx$ such that
    $\cl{\T{K}}{\ringsop{\rx}{2d}}=\pos{K}$.}
    \item{If $K$ is compact subset then $\norm{K}{f}=\sup_{x\in K}|f(x)|$ induces a norm on $\rx$ such that $\cl{\norm{K}{}}{\ringsop{\rx}{2d}}=\pos{K}$.}
\end{enumerate}
\end{crl}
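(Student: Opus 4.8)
The plan is to apply the general theorems proved earlier in the paper directly to the concrete situation $R=\rx$, $\V{R}=\reals^n$. The only thing that genuinely needs checking is that the hypotheses of those theorems are met, in particular that the canonical map $\hat{\ }:\rx\to C(\reals^n)$ is injective and that $K$ being Zariski dense in $\reals^n$ in the classical sense agrees with the notion of Zariski density used in Theorem \ref{ZD-H-Ker}. First I would note that $\rx$ is a finitely generated free commutative $\reals$-algebra, so a homomorphism $\alpha\in\V{\rx}$ is determined by the point $(\alpha(X_1),\dots,\alpha(X_n))\in\reals^n$, giving the identification $\V{\rx}=\reals^n$; under this identification $\hat{f}(\alpha)=f(\alpha(X_1),\dots,\alpha(X_n))$ is just the polynomial function associated to $f$, and since a nonzero polynomial over $\reals$ cannot vanish identically on $\reals^n$, the map $\hat{\ }$ is injective. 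Thus the standing assumption of Section \ref{topologies} holds, and $\Z{f}$ for $f\in\rx$ is exactly the real zero set of $f$; consequently a closed set $K\subseteq\reals^n$ is Zariski dense (in the sense that $K\not\subseteq\Z{I}$ for any proper ideal $I$) precisely when no nonzero polynomial vanishes on all of $K$, which is the usual meaning of Zariski density.

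For part (1), with $K$ closed and Zariski dense, Theorem \ref{ZD-H-Ker} gives that $\T{K}$ is Hausdorff (equivalently $\ker\Phi=\{0\}$). That $\T{K}$ is lmc is immediate from its construction: it is generated by the family $\mathcal{F}_K=\{\rho_\alpha:\alpha\in K\}$ of multiplicative ring-seminorms $\rho_\alpha(f)=|f(\alpha)|$ (the multiplicativity was observed right after the definition of $\T{K}$), and a topology generated by multiplicative seminorms is lmc by the characterization theorem in Section \ref{Background}. Then Theorem \ref{SosRngNC}, applied with $R=\rx$ and this closed $K$, yields $\cl{\T{K}}{\ringsop{\rx}{2d}}=\pos{K}$, where $\pos{K}=\{f\in\rx:\hat{f}\ge0\text{ on }K\}$ is the cone of polynomials nonnegative on $K$.

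For part (2), assume in addition that $K$ is compact. Since $\hat{\ }$ is injective, Remark \ref{pullback} tells us that $\norm{K}{f}:=\norm{K}{\hat f}=\sup_{x\in K}|f(x)|$ is an honest norm on $\rx$ (not merely a seminorm), because $K$ being Zariski dense makes $\Phi$ injective. Then Theorem \ref{SosRng}, applied to this compact $K\subseteq\V{\rx}$, gives $\cl{\norm{K}{}}{\ringsop{\rx}{2d}}=\pos{K}$.

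The proof is therefore essentially a verification of hypotheses; the only point requiring a moment's thought—and the one I would flag as the ``obstacle''—is reconciling the paper's abstract notion of Zariski density on $\V{R}$ with the classical one on $\reals^n$, which hinges precisely on the injectivity of $\hat{\ }:\rx\to C(\reals^n)$, i.e. on the elementary fact that a nonzero real polynomial has a nonempty complement of its zero set in $\reals^n$.
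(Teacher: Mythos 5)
Your proposal is correct and follows essentially the same route as the paper: apply Theorem \ref{SosRngNC} and Theorem \ref{SosRng} for the two equalities, and use Theorem \ref{ZD-H-Ker} and Remark \ref{pullback} to get the Hausdorff/norm claims. Your additional verification that $\hat{\ }:\rx\to C(\reals^n)$ is injective and that the abstract and classical notions of Zariski density coincide is a sound (and worthwhile) elaboration of hypotheses the paper takes for granted in the surrounding discussion.
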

\begin{proof}
Apply theorems \ref{SosRngNC} and \ref{SosRng} to get the asserted
equalities. The fact that $\T{K}$ is Hausdorff and $\norm{K}{}$ is
actually a norm, follows from Theorem \ref{ZD-H-Ker} and Remark
\ref{pullback} respectively.
\end{proof}
\begin{rem}~\\
(i) According to \cite[Theorem 3.1]{GKS}, the equality
$\cl{\norm{K}{}}{\ringsop{\rx}{2d}}=\pos{K}$ is equivalent to the
following: If $L$ is a linear functional on $\rx$ satisfying
$L(h^{2d})\ge0$ for all $h\in\rx$ and $\forall f\in\rx~|L(f)|\leq
C\norm{K}{f}$ for some real $C>0$, then there exists a positive
Borel measure $\mu$ on $K$, representing $L$:
\[
    \forall f\in\rx\quad L(f)=\int_Kf~d\mu.
\]
(ii) Reinterpreting the equation
$\cl{\T{K}}{\ringsop{\rx}{2d}}=\pos{K}$, we have the following: If
$L$ be a linear functional on $\rx$, satisfying $L(h^{2d})\ge0$ for
all $h\in\rx$ and for every $x\in K$, $\exists C_x>0$ such that
$|L(f)|\leq C_x|f(x)|$, then there exists a positive Borel measure
$\mu$ on $K$ representing $L$:
\[
    \forall f\in\rx\quad L(f)=\int_Kf~d\mu.
\]
\end{rem}

We now discuss the case when $K$ is not Zariski dense. By
Theorem \ref{ZD-H-Ker}, $\Phi$ is not injective and hence the
topology $\T{K}$ (or when $K$ is compact the topology induced by
$\norm{K}{a}=\sup_{\alpha\in K}|\hat{a}(\alpha)|$) will not be
Hausdorff. Let $K\subseteq\reals^n$ be given, then for $\epsilon>0$
the set $K^{(\epsilon)}:=\cl{}{\bigcup_{x\in K}N_{\epsilon}(x)}$ has
nonempty interior, in fact $K\subseteq(K^{(\epsilon)})^{\circ}$ and
hence $K^{(\epsilon)}$ is Zariski dense in $\reals^n$. If
$0<\epsilon_1\leq\epsilon_2$, then $K^{\epsilon_1}\subseteq
K^{\epsilon2}$ and the identity map
\[
    id_{\epsilon_2,\epsilon_1}:(\rx,\T{K^{\epsilon_2}})\rightarrow(\rx,\T{K^{\epsilon_1}})
\]
is continuous. Therefore the family
$\{(\rx,\T{K^{(\epsilon)}})_{\epsilon>0},(id_{\epsilon\delta})_{\delta\leq\epsilon}\}$
is an inverse system of lc and Hausdorff vector spaces. The inverse
limit of this system exists and is a lc and Hausdorff space
\cite[Section 2.6]{HJ}. Let
$(V,\tau_K)=\varprojlim\limits_{\epsilon>0}(\rx,\T{K^{(\epsilon)}})$.
Then $V=\rx$ and $\tau_K$ is a lc and Hausdorff topology.
\begin{thm}\label{XLCH}
For any closed $K\subseteq\reals^n$ and integer $d\ge1$, $\cl{\tau_K}{\ringsop{\rx}{2d}}$ is the cone $\tilde{\Pos}(K)$, consisting of those polynomials
which are non-negative over some open set, containing $K$.
\end{thm}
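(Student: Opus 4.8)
The plan is to prove the equality $\cl{\tau_K}{\ringsop{\rx}{2d}}=\tilde{\Pos}(K)$ by the two inclusions, in both directions transporting the question to the tubes $K^{(\epsilon)}$, for which Theorem~\ref{SosRngNC} already supplies $\cl{\T{K^{(\epsilon)}}}{\ringsop{\rx}{2d}}=\pos{K^{(\epsilon)}}$ (each $K^{(\epsilon)}$ is closed and Zariski dense). A preparatory step is to rewrite the target cone as $\tilde{\Pos}(K)=\bigcup_{\epsilon>0}\pos{K^{(\epsilon)}}$. The inclusion $\supseteq$ here is immediate, since $\bigcup_{x\in K}N_{\epsilon}(x)$ is an open set with $K\subseteq\bigcup_{x\in K}N_{\epsilon}(x)\subseteq K^{(\epsilon)}$, so a polynomial nonnegative on $K^{(\epsilon)}$ is nonnegative on an open neighbourhood of $K$. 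For the reverse inclusion one takes $p\ge0$ on an open $U\supseteq K$ and must produce $\epsilon>0$ with $K^{(\epsilon)}\subseteq U$: for compact $K$ this is positivity of the distance from $K$ to $\reals^n\setminus U$, and for general closed $K$ one uses that $\{x\in\reals^n:p(x)<0\}$ is a (closed) semialgebraic set disjoint from the closed set $K$ to extract a uniform tube. I expect this reduction to need some care in the non-compact case.

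For the inclusion $\cl{\tau_K}{\ringsop{\rx}{2d}}\subseteq\tilde{\Pos}(K)$ I would argue contrapositively. If $p\notin\bigcup_{\epsilon>0}\pos{K^{(\epsilon)}}$, then $p\notin\pos{K^{(\epsilon)}}$ for every $\epsilon>0$, so by Theorem~\ref{SosRngNC}, $p\notin\cl{\T{K^{(\epsilon)}}}{\ringsop{\rx}{2d}}$; choosing one $\epsilon$ and a $\T{K^{(\epsilon)}}$-open neighbourhood $W$ of $p$ with $W\cap\ringsop{\rx}{2d}=\emptyset$, the set $W$ is also $\tau_K$-open because the canonical projection $(\rx,\tau_K)\to(\rx,\T{K^{(\epsilon)}})$ of the inverse system is continuous. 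Hence $p\notin\cl{\tau_K}{\ringsop{\rx}{2d}}$.

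For the reverse inclusion $\tilde{\Pos}(K)\subseteq\cl{\tau_K}{\ringsop{\rx}{2d}}$, fix $p\in\pos{K^{(\epsilon)}}$ and a basic $\tau_K$-neighbourhood $U^{\eta}_{\alpha_1,\dots,\alpha_m}(p)$. Using the inverse-limit description of $\tau_K$ one reduces to the case in which the points $\alpha_1,\dots,\alpha_m$ all lie in a single tube $K^{(\delta)}$ with $\delta\le\epsilon$, hence in $K^{(\epsilon)}$, where $p$ is nonnegative; then one copies the interpolation argument from the proof of Theorem~\ref{SosRngNC} verbatim --- pick $m_0\in\naturals$ with $\alpha_i(p)<2^{2dm_0}$ for all $i$, interpolate $\sqrt[2d]{\alpha_i(p/2^{2dm_0})}$ at the $\alpha_i$ by a real polynomial, approximate its coefficients by elements of $\integers[\frac{1}{2}]$, and clear denominators --- to obtain an element of $\ringsop{\rx}{2d}$ inside $U^{\eta}_{\alpha_1,\dots,\alpha_m}(p)$. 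Taking the union over $\epsilon>0$ then yields the claim.

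The step I expect to be the main obstacle is precisely the reduction just invoked: controlling which finite configurations of evaluation points can occur in a $\tau_K$-neighbourhood, so that such a neighbourhood is always subordinate to the positivity tube of the given polynomial. This is exactly where the construction of $\tau_K$ as the projective limit $\varprojlim_{\epsilon>0}(\rx,\T{K^{(\epsilon)}})$ --- rather than of any single $\T{K^{(\epsilon)}}$ --- has to be used, and it is the essential new feature of the non-Zariski-dense setting; once it is in place, the two halves above combine to give $\cl{\tau_K}{\ringsop{\rx}{2d}}=\tilde{\Pos}(K)$.
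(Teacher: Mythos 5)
Your first inclusion is sound, but both of the reductions on which the rest of the argument hangs break down, and the second one cannot be repaired. The preparatory identity $\tilde{\Pos}(K)=\bigcup_{\epsilon>0}\pos{K^{(\epsilon)}}$ already fails for non-compact closed $K$: take $K$ to be the $X_1$-axis in $\reals^2$ and $p=1-X_1^2X_2^2$. Then $p>0$ on the open set $\{|X_1X_2|<1\}\supseteq K$, so $p\in\tilde{\Pos}(K)$, yet $p$ is negative at the point $(2/\epsilon,\epsilon)\in K^{(\epsilon)}$ for every $\epsilon>0$. Your hoped-for semialgebraic rescue does not apply, because a closed semialgebraic set disjoint from $K$ (here $\{|X_1X_2|\ge1\}$) can still be at distance zero from $K$, so no uniform tube $K^{(\epsilon)}$ avoids it.

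The fatal gap, however, is the reduction you yourself single out as the main obstacle in the inclusion $\tilde{\Pos}(K)\subseteq\cl{\tau_K}{\ringsop{\rx}{2d}}$: it is not merely delicate, it is impossible. Since $\tau_K$ is the initial topology with respect to the projections onto all the $(\rx,\T{K^{(\epsilon)}})$, every evaluation seminorm $\rho_\alpha$ with $\alpha\in\reals^n$ is $\tau_K$-continuous (each $\alpha$ lies in $K^{(\delta)}$ for $\delta$ large enough), so a basic $\tau_K$-neighbourhood of $p$ may pin down the value of $p$ at points arbitrarily far from $K$; there is no way to confine the $\alpha_i$ to a tube on which $p\ge0$. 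Concretely, for $n=1$, $K=\{0\}$ and $p=1-X^2\in\tilde{\Pos}(K)$, the set $\{q\in\rx:|q(2)-p(2)|<1\}$ is a $\tau_K$-open neighbourhood of $p$ (as $2\in K^{(3)}$) all of whose elements are negative at $2$, hence it meets no sum of $2d$-powers and $p\notin\cl{\tau_K}{\ringsop{\rx}{2d}}$. Indeed, your own first half, applied one $\epsilon$ at a time, already yields $\cl{\tau_K}{\ringsop{\rx}{2d}}\subseteq\bigcap_{\epsilon>0}\pos{K^{(\epsilon)}}=\pos{\reals^n}$, which is in general a proper subset of $\tilde{\Pos}(K)$. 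For comparison, the paper's own one-line proof computes the closure as exactly this intersection $\bigcap_{\epsilon}\pos{K^{(\epsilon)}}$ via the projective-limit closure formula and then asserts that the intersection equals $\tilde{\Pos}(K)$ ``by definition''; your union-based reading of $\tilde{\Pos}(K)$ is the natural one, and the discrepancy between the union your strategy needs and the intersection the topology actually delivers is precisely the wall you hit. With $\tau_K$ as defined, the inclusion $\tilde{\Pos}(K)\subseteq\cl{\tau_K}{\ringsop{\rx}{2d}}$ fails whenever $\tilde{\Pos}(K)$ contains a polynomial that is negative somewhere on $\reals^n$, so no variant of the interpolation argument can close this gap without first changing the topology (e.g.\ to an inductive rather than projective limit of the $\T{K^{(\epsilon)}}$).
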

\begin{proof}
Since $\tau_K=\varprojlim\limits_{\epsilon>0}\T{K^{(\epsilon)}}$, we have
\[
    \cl{\tau_K}{\ringsop{\rx}{2d}}=\varprojlim\cl{\norm{K^{(\epsilon)}}{}}{\ringsop{\rx}{2d}}=\bigcap_{\epsilon}\pos{K^{(\epsilon)}}
\]
(See \cite[\S 4.4]{NBTop}) and $\bigcap_{\epsilon}\pos{K^{(\epsilon)}}=\tilde{\Pos}(K)$ by definition.
\end{proof}
\begin{rem}
Assuming $K$ is compact implies the compactness of each $K^{(\epsilon)}$. Therefore $\norm{K^{(\epsilon)}}{}$ is defined and and is a norm. Moreover, for
$0<\epsilon_1\leq\epsilon_2$, the identity map
\[
    id_{\epsilon_2,\epsilon_1}:(\rx,\norm{K^{(\epsilon_2)}}{})\rightarrow(\rx,\norm{K^{(\epsilon_1)}}{})
\]
is continuous by
$\norm{K^{(\epsilon_1)}}{}\leq\norm{K^{(\epsilon_2)}}{}$. So
$\{(\rx,\norm{K^{(\epsilon)}}{}),(id_{\epsilon\delta})_{\delta<\epsilon}\}$
is an inverse limit of normed spaces. The inverse limit topology
$\tau_K=\varprojlim\limits_{\epsilon>0}\norm{K^{(\epsilon)}}{}$
exists and is a lc Hausdorff topology on $\rx$. The equality
$\cl{\tau_K}{\ringsop{\rx}{2d}}=\tilde{\Pos}(K)$ can be verified
similar to Theorem \ref{XLCH}.
\end{rem}
\subsection{Comparison with sub-multiplicative norm topologies}
Now, let $\norm{}{}$ be a sub-multiplicative norm on $\rx$, i.e.
\[
	\norm{}{f\cdot g}\leq\norm{}{f}\cdot\norm{}{g}\quad\forall f,g\in\rx.
\]
\begin{prop}\label{SMNorm}
The $\norm{}{}$-topology is finer than $\norm{\K{\norm{}{}}}{}$-topology and
\[
    \cl{\norm{}{}}{\ringsop{\rx}{2d}}=\cl{\norm{\K{\norm{}{}}}{}}{\ringsop{\rx}{2d}}=\pos{\K{\norm{}{}}},
\]
for every integer $d\ge1$.
\end{prop}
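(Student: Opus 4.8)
The plan is to deduce this from Theorem \ref{ClsrNrmdAlg} together with the observation that the Gelfand spectrum $\K{\norm{}{}}$ is the \emph{same} compact set whether we view $\rx$ as a normed algebra under $\norm{}{}$ or as a subalgebra of $C(\K{\norm{}{}})$ under the sup-norm $\norm{\K{\norm{}{}}}{}$. First I would show the $\norm{}{}$-topology is finer than the $\norm{\K{\norm{}{}}}{}$-topology: by Lemma \ref{CtsHom}, every $\alpha\in\K{\norm{}{}}$ satisfies $|\alpha(f)|\le\norm{}{f}$ for all $f\in\rx$, hence $\norm{\K{\norm{}{}}}{f}=\sup_{\alpha\in\K{\norm{}{}}}|\alpha(f)|\le\norm{}{f}$; therefore every $\norm{\K{\norm{}{}}}{}$-ball contains a $\norm{}{}$-ball, which is exactly the finer/coarser comparison claimed.

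Next, the two closure equalities. For the middle equality $\cl{\norm{\K{\norm{}{}}}{}}{\ringsop{\rx}{2d}}=\pos{\K{\norm{}{}}}$: the set $\K{\norm{}{}}$ is a compact subset of $\V{\rx}=\reals^n$ by Theorem \ref{ClsrNrmdAlg} (it equals $\K{M_{2d}}$ for the archimedean module $M_{2d}$), so we may apply Corollary \ref{SosPoly}(2) — or directly Theorem \ref{SosRng} — with the compact set $K=\K{\norm{}{}}$, provided $\K{\norm{}{}}$ is Zariski dense so that $\norm{\K{\norm{}{}}}{}$ is genuinely a norm; if it is not Zariski dense one still gets the equality of closures for the seminorm $\norm{\K{\norm{}{}}}{}$ using Theorem \ref{SosRng}, since that theorem does not require Zariski density. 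For the left equality $\cl{\norm{}{}}{\ringsop{\rx}{2d}}=\pos{\K{\norm{}{}}}$: this is precisely Theorem \ref{ClsrNrmdAlg} applied to the normed $\reals$-algebra $(\rx,\norm{}{})$, whose Gelfand spectrum is $\K{\norm{}{}}$ by the chain of equivalences preceding Theorem \ref{ClsrNrmdAlg}.

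The one point requiring a little care — and the only possible obstacle — is the interaction between the "finer topology" statement and the "equal closures" statement. A priori, passing to a finer topology can only \emph{shrink} a closure, so from the finer/coarser comparison alone one gets $\cl{\norm{}{}}{\ringsop{\rx}{2d}}\subseteq\cl{\norm{\K{\norm{}{}}}{}}{\ringsop{\rx}{2d}}$; the content of the proposition is that this inclusion is in fact an equality, and that is forced because \emph{both} closures independently equal $\pos{\K{\norm{}{}}}$ (the former by Theorem \ref{ClsrNrmdAlg}, the latter by Theorem \ref{SosRng}), the Gelfand spectrum being literally the same set in both descriptions. So the proof is simply: invoke Theorem \ref{ClsrNrmdAlg} for the left-hand equality, invoke Theorem \ref{SosRng} (via Corollary \ref{SosPoly}) for the right-hand equality, note that the two right-hand sides coincide, and record the finer/coarser comparison from Lemma \ref{CtsHom}. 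No genuinely new argument is needed beyond reconciling these already-established facts.
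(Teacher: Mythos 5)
Your proposal is correct and follows essentially the same route as the paper: the comparison of topologies via Lemma \ref{CtsHom} (giving $\norm{\K{\norm{}{}}}{f}\leq\norm{}{f}$), the sup-norm closure via Corollary \ref{SosPoly}/Theorem \ref{SosRng} on the compact set $\K{\norm{}{}}$, and the $\norm{}{}$-closure via Theorem \ref{ClsrNrmdAlg}. Your extra care about Zariski density (norm versus seminorm) and the remark that a finer topology can only shrink the closure are sound additions but not departures from the paper's argument.
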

\begin{proof}
To prove that $\norm{}{}$-topology is finer than $\norm{\K{\norm{}{}}}{}$-topology, we show $\norm{\K{\norm{}{}}}{f}\leq\norm{}{f}$. Note that
$\K{\norm{}{}}$ is compact by \ref{ClsrNrmdAlg}, so $\norm{\K{\norm{}{}}}{}$ is defined and by Lemma \ref{CtsHom},
\[
	\norm{\K{\norm{}{}}}{f} = \sup_{x\in\K{\norm{}{}}}|f(x)| \leq \sup_{x\in\K{\norm{}{}}}\norm{}{f} = \norm{}{f}.
\]
Therefore, the identity map $id:(\rx,\norm{}{})\rightarrow(\rx,\norm{\K{\norm{}{}}}{})$ is continuous and hence
$\norm{}{}$-topology is finer than $\norm{\K{\norm{}{}}}{}$-topology.
Moreover, $\cl{\norm{\K{\norm{}{}}}{}}{\ringsop{\rx}{2d}}=\pos{\K{\norm{}{}}}$ by Corollary \ref{SosPoly}.
\end{proof}

\begin{dfn}~
\begin{itemize}
	\item[(i)]{A function $\phi:\naturals^n\rightarrow\reals^{+}$ is called
an \textit{absolute value} if \[\phi(\underline{0})=1 \mbox{ and }
\forall s,t\in\naturals^n ~ \phi(s+t)\leq\phi(s)\phi(t)\>.\]}
	\item[(ii)]{For a polynomial $f=\sum_{s\in\naturals^n}f_s\ux^s$, let
$\norm{\phi}{f}:=\sum_{s\in\naturals^n}|f_s|\phi(s)$.}
\end{itemize}
\end{dfn} 

If $\phi>0$ on $\naturals^n$, then $\norm{\phi}{}$ defines a norm on
$\rx$. Berg and Maserick \cite{BCRBK,BM-EB} show that the closure of
$\sos$ with respect to the $\norm{\phi}{}$-topology is
$\pos{\K{\phi}}$, where
$\K{\phi}:=\{x\in\reals^n:|x^s|\leq\phi(s),~\forall
s\in\naturals^n\}=\K{\norm{\phi}{}}$, the Gelfand spectrum of $(\rx,\norm{\phi}{})$. 

If $\phi>0$ then $\K{\phi}$ has non-empty interior, and hence is Zariski dense. 
By \cite{GMW}, $\K{\phi}$ is compact. Hence $\norm{\K{\phi}}{}$ is defined and is a norm by Remark \ref{pullback}(1).

The following corollary to Proposition \ref{SMNorm} generalizes the result of Berg and Maserick \cite[Theorem 4.2.5]{BCRBK} to the closure of $\ringsop{\rx}{2d}$.
\begin{crl}\label{BergComp}
The $\norm{\phi}{}$-topology is finer than $\norm{\K{\phi}}{}$-topology and
\[
    \cl{\norm{\phi}{}}{\sos}=\cl{\norm{\K{\phi}}{}}{\sos}=\pos{\K{\phi}}.
\]
\end{crl}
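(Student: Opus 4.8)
The plan is to deduce Corollary \ref{BergComp} directly from Proposition \ref{SMNorm}, treating it as a straightforward specialization once the hypotheses of that proposition are checked. First I would observe that $\norm{\phi}{}$ is a sub-multiplicative norm on $\rx$: indeed, for $f=\sum_s f_s\ux^s$ and $g=\sum_t g_t\ux^t$ we have $\norm{\phi}{fg}=\sum_u|\sum_{s+t=u}f_sg_t|\phi(u)\leq\sum_{s,t}|f_s||g_t|\phi(s+t)\leq\sum_{s,t}|f_s||g_t|\phi(s)\phi(t)=\norm{\phi}{f}\norm{\phi}{g}$, using the defining inequality $\phi(s+t)\leq\phi(s)\phi(t)$. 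Since $\phi>0$, $\norm{\phi}{}$ is genuinely a norm, so $(\rx,\norm{\phi}{})$ is a normed $\reals$-algebra and Proposition \ref{SMNorm} applies verbatim.

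Next I would identify the Gelfand spectrum: the paper has already recorded that $\K{\norm{\phi}{}}=\K{\phi}=\{x\in\reals^n:|x^s|\leq\phi(s)\ \forall s\in\naturals^n\}$, which has nonempty interior (it contains a neighborhood of the origin since $\phi(s)\geq 1/\phi(-s)\cdot\ldots$, or more simply $\K\phi$ is balanced and contains $0$ in its interior because $\phi(e_i)<\infty$), hence is Zariski dense, and is compact by the cited result of \cite{GMW}. Thus by Remark \ref{pullback}(1) the sup-seminorm $\norm{\K{\phi}}{}$ is a genuine norm on $\rx$. Plugging $\norm{}{}=\norm{\phi}{}$ into Proposition \ref{SMNorm} immediately yields that the $\norm{\phi}{}$-topology is finer than the $\norm{\K{\phi}}{}$-topology and that
\[
    \cl{\norm{\phi}{}}{\sos}=\cl{\norm{\K{\phi}}{}}{\sos}=\pos{\K{\phi}},
\]
which is exactly the asserted statement with $d=1$ (and the same argument gives the analogous statement for $\ringsop{\rx}{2d}$, $d\ge1$).

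There is essentially no obstacle here: the corollary is a packaging of Proposition \ref{SMNorm}, and the only things to verify are the sub-multiplicativity of $\norm{\phi}{}$ (a one-line triangle-inequality computation on the coefficients) and the bookkeeping identification $\K{\norm{\phi}{}}=\K{\phi}$ together with the compactness and Zariski-density needed for $\norm{\K\phi}{}$ to be a norm — all of which are either immediate or already quoted in the surrounding text. If anything needs care, it is only making sure that the hypothesis ``$\phi>0$ on $\naturals^n$'' is invoked both where we need $\norm{\phi}{}$ to be a norm (rather than a seminorm) and where we need $\K{\phi}$ to be Zariski dense; both are noted in the paragraph preceding the corollary, so the proof can simply cite Proposition \ref{SMNorm} and the preceding discussion.
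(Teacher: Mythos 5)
Your proof is correct and is exactly the route the paper takes: the corollary is stated without a separate proof precisely because it is the specialization of Proposition \ref{SMNorm} to the sub-multiplicative norm $\norm{\phi}{}$, together with the identification $\K{\norm{\phi}{}}=\K{\phi}$ and the compactness and Zariski-density facts recorded in the paragraph immediately preceding the corollary. Your parenthetical attempt to justify that $\K{\phi}$ has nonempty interior is garbled (there is no $\phi(-s)$ for $s\in\naturals^n$), but this is immaterial to the comparison since you may, as the paper does, simply quote that fact from the surrounding discussion.
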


\subsection{Comparison with Lasserre's topology}

Recently, Lasserre \cite{JL:K-Moment} considered the following
$\norm{w}{}$ on $\rx$:
\[
    \norm{w}{\sum_{s\in\naturals^n}f_s\ux^s} = \sum_{s\in\naturals^n}|f_s|w(s),
\]
where $w(s)=(2\lceil|s|/2\rceil)!$ and
$|s|=|(s_1,\dots,s_n)|=s_1+\cdots+s_n$. He proved that for any basic
semi-algebraic set $K\subseteq\reals^n$, defined by a finite set of
polynomials $S$, the closure of the quadratic module $M_S$ and the
preordering $T_S$ with respect to $\norm{w}{}$ are equal to
$\pos{K}$.
\begin{prop}
Let $K_S\subseteq\reals^n$  be a basic closed semi-algebraic set and $d\ge1$ an integer.
\begin{enumerate}
    \item{If $K_S$ is compact, then the $\norm{w}{}$-topology is finer than $\norm{K_S}{}$-topology and
\[
    \cl{\norm{w}{}}{M_S}=\cl{\norm{w}{}}{T_S}=\cl{\norm{K_S}{}}{\ringsop{\rx}{2d}}=\pos{K_S}.
\]
    }
    \item{$\norm{w}{}$-topology is finer than $\T{K_S}$ and
\[
    \cl{\norm{w}{}}{M_S}=\cl{\norm{w}{}}{T_S}=\cl{\T{K_S}}{\ringsop{\rx}{2d}}=\pos{K_S}.
\]
    }
\end{enumerate}
\end{prop}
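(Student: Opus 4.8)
The plan is to reduce the statement to results already established in the paper, namely Lasserre's theorem $\cl{\norm{w}{}}{M_S}=\cl{\norm{w}{}}{T_S}=\pos{K_S}$ (quoted from \cite{JL:K-Moment}) together with Corollary \ref{SosPoly} (which gives $\cl{\norm{K_S}{}}{\ringsop{\rx}{2d}}=\pos{K_S}$ when $K_S$ is compact and Zariski dense, and $\cl{\T{K_S}}{\ringsop{\rx}{2d}}=\pos{K_S}$ when $K_S$ is closed and Zariski dense). The only genuine work is the comparison of topologies: showing $\norm{w}{}$ is finer than $\norm{K_S}{}$ in part (1), and finer than $\T{K_S}$ in part (2). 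Once the topology comparisons are in place, each chain of equalities is immediate: a finer topology has more closed sets, so $\cl{\norm{w}{}}{\ringsop{\rx}{2d}}\subseteq\cl{\norm{K_S}{}}{\ringsop{\rx}{2d}}$, and since $\ringsop{\rx}{2d}\subseteq M_S\subseteq T_S\subseteq\pos{K_S}$ with $\pos{K_S}$ closed in the coarser topology, all the intermediate closures are squeezed to $\pos{K_S}$.

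For part (1), I would first check that $K_S$ being compact makes $\norm{K_S}{f}=\sup_{x\in K_S}|f(x)|$ a genuine norm — this needs $K_S$ Zariski dense, which for a \emph{compact} basic closed semialgebraic set with nonempty interior is automatic; if $K_S$ has empty interior one must either assume Zariski density or interpret $\norm{K_S}{}$ as a seminorm, and I would add a hypothesis that $K_S$ is Zariski dense (consistent with the Example in Section \ref{Background}). Then the key estimate is $\norm{K_S}{f}\leq C\norm{w}{f}$ for a constant $C$ independent of $f$. Writing $f=\sum_s f_s\ux^s$ and bounding $|x^s|$ for $x\in K_S$: since $K_S$ is compact it lies in some cube $[-\Lambda,\Lambda]^n$, so $|x^s|\leq\Lambda^{|s|}$. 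Comparing $\Lambda^{|s|}$ with $w(s)=(2\lceil|s|/2\rceil)!$, which grows super-exponentially, one has $\Lambda^{|s|}\leq w(s)$ for all $|s|$ beyond some $s_0$ and $\Lambda^{|s|}\leq C_0 w(s)$ for the finitely many remaining $s$; hence $|x^s|\leq C\,w(s)$ uniformly, giving $\norm{K_S}{f}\leq\sum_s|f_s|\,|x^s|\leq C\sum_s|f_s|w(s)=C\norm{w}{f}$. Thus $\mathrm{id}:(\rx,\norm{w}{})\to(\rx,\norm{K_S}{})$ is continuous, i.e. $\norm{w}{}$ is finer.

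For part (2), I would show $\norm{w}{}$ is finer than $\T{K_S}$ by checking that every generating seminorm $\rho_\alpha$ of $\T{K_S}$ is $\norm{w}{}$-continuous: for $\alpha\in K_S$ corresponding to a point $x\in K_S$, $\rho_\alpha(f)=|f(x)|\leq\norm{K_S}{f}\leq C\norm{w}{f}$ by the estimate from part (1) (here we do not even need Zariski density, since we only need $\T{K_S}$ coarser than $\norm{w}{}$, not Hausdorffness of $\norm{K_S}{}$). Hence each $\T{K_S}$-subbasic open set is $\norm{w}{}$-open. Then the same squeezing argument applies, using $\cl{\T{K_S}}{\ringsop{\rx}{2d}}=\pos{K_S}$ from Corollary \ref{SosPoly}(1) (again under Zariski density of $K_S$, which should be added as a standing assumption or noted). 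The main obstacle — really the only non-bookkeeping point — is the uniform estimate $|x^s|\leq C\,w(s)$ on $K_S$; this is where the super-factorial growth of Lasserre's weight $w$ is used, and it works precisely because any compact set is contained in some cube and $\Lambda^{|s|}=o(w(s))$.
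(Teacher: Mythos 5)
Your proposal follows essentially the same route as the paper's proof: the only substantive point in either argument is the estimate $\norm{K_S}{\ux^s}\leq M^{|s|}$ against $w(s)\geq|s|!$, which makes the formal identity $(\rx,\norm{w}{})\rightarrow(\rx,\norm{K_S}{})$ (resp.\ each evaluation $e_x$) bounded, and the chains of equalities then come from Lasserre's theorem \cite{JL:K-Moment} together with Theorems \ref{SosRng} and \ref{SosRngNC}. Two corrections, both easily repaired. First, you should not add a Zariski-density hypothesis: that assumption enters Corollary \ref{SosPoly} only to make $\T{K_S}$ Hausdorff and $\norm{K_S}{}$ an honest norm (Theorem \ref{ZD-H-Ker}), whereas the closure computations themselves, Theorems \ref{SosRng} and \ref{SosRngNC}, hold for arbitrary compact (resp.\ closed) $K\subseteq\V{R}$; citing those directly, the proposition stands exactly as stated, with $\norm{K_S}{}$ possibly only a seminorm and $\T{K_S}$ possibly non-Hausdorff. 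Second, in part (2) the set $K_S$ is not assumed compact, so the chain $|f(x)|\leq\norm{K_S}{f}\leq C\norm{w}{f}$ is unavailable ($\sup_{x\in K_S}|f(x)|$ is typically infinite); you must instead use the pointwise form of your own estimate, namely $|x^s|\leq C_x\,w(s)$ with a constant depending on the point $x$, which yields $|f(x)|\leq C_x\norm{w}{f}$ and hence the $\norm{w}{}$-continuity of each generating seminorm $\rho_x$ of $\T{K_S}$ --- this is precisely what the paper does, and it is all that is needed since the seminorms defining $\T{K_S}$ are indexed by individual points. With these two adjustments your argument coincides with the paper's.
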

\begin{proof}
(1) To show that $\norm{w}{}$-topology is finer than $\norm{K_S}{}$-topology, it suffices to prove that the formal identity map
\[
    id:(\rx,\norm{w}{})\longrightarrow(\rx,\norm{K_S}{})
\]
is continuous. Let $p_i:\reals^n\rightarrow\reals$ be the projection on $i$-th coordinate and
\[
    M=\max_{1\leq i\leq n}\{|p_i(x)|:x\in K_S\}.
\]
So, for each $s\in\naturals^n$ we have $\norm{K_S}{\ux^s}\leq M^{|s|}$. Also $w(s)\ge|s|!$ for all $s\in\naturals^n$. By Stirling's formula
$|s|!\sim\sqrt{2\pi}e^{(|s|+\frac{1}{2})\ln |s|-|s|}$,
we see that
\[
    \begin{array}{lclcl}
        \frac{\norm{K_S}{\ux^s}}{\norm{w}{\ux^s}} & \leq & \frac{M^{|s|}}{|s|!}&&\\
         & \sim & \frac{1}{\sqrt{2\pi}} e^{|s|(\ln M-\ln|s|+1)-\frac{1}{2}\ln|s|} & \xrightarrow{|s|\rightarrow\infty} & 0.
    \end{array}
\]
Therefore for some $N\in\naturals$, if $|s|>N$ then $\frac{\norm{K_S}{\ux^s}}{\norm{w}{\ux^s}}<1$, which shows that $id$ is bounded and hence continuous.
The asserted equality follows from Corollary \ref{SosRng} and \cite[Theorem 3.3]{JL:K-Moment}.

(2) It suffices to show that for any $x\in K_S$, the evaluation map $e_x(f)=f(x)$ is $\norm{w}{}$-continuous.
Since $\frac{|x^s|}{|s|!}\xrightarrow{|s|\rightarrow\infty}0$, we deduce that $\sup_{f\in\rx}\frac{|f(x)|}{\norm{w}{f}}$ is bounded. So, $e_x$ and hence $\rho_x$
is $\norm{w}{}$-continuous. Therefore any basic open set in $\T{K_S}$ is $\norm{w}{}$-open. The asserted equality follows from Theorem \ref{SosRngNC} and
\cite[Theorem 3.3]{JL:K-Moment}.
\end{proof}
%

\end{document}